\documentclass[a4paper, 10pt, reqno, dvipsnames]{amsart}
\usepackage[a4paper, margin=3.4cm, top=3cm, bottom=3cm]{geometry}
\usepackage[utf8]{inputenc}

\usepackage{microtype}

\usepackage[english]{babel}

\usepackage{adjustbox}

\usepackage{verbatim}
\usepackage{newcent}

\usepackage{xcolor}

\usepackage{longtable}
\parskip=3pt
\setcounter{tocdepth}{1}
\usepackage{graphicx, amsmath, amsfonts, amssymb, amsthm, amscd, amsbsy, amstext, tikz,tikz-cd, mathrsfs, mathtools, enumerate, enumitem, tensor,xfrac}   
\usetikzlibrary{decorations.pathreplacing}
\usetikzlibrary{calc}

\usepackage{latexsym,ifthen,stmaryrd,fancyhdr,empheq,verbatim, epigraph}

\usepackage{capt-of}

\usepackage{dynkin-diagrams}

\usepackage[colorlinks=true, allcolors=black]{hyperref}

\usepackage{cleveref} 

 \usepackage{mathabx}
 \usepackage{bbm}
 \usepackage{pifont}
 \usepackage{manfnt}

\usepackage{tikz-cd}
\usepackage{mathtools}

\makeatletter
\@namedef{subjclassname@2020}{%
  \textup{2020} Mathematics Subject Classification}
\makeatother

\usepackage[OT2, T1]{fontenc} 

\usepackage[textsize=scriptsize]{todonotes}

\usepackage{scalerel, stackengine}
%https://tex.stackexchange.com/questions/123219/writing-above-and-below-a-symbol-simultaneously

\usepackage{graphicx}

\setlength{\marginparwidth}{2cm}

\usepackage{todonotes} 

\mathtoolsset{%showonlyrefs,
  showmanualtags}    %%% showonlyrefs fa sparire i numeri delle equazioni

\hypersetup{anchorcolor=black, linktocpage=false,
colorlinks=true,
citecolor=RawSienna,
linkcolor=Mahogany,
urlcolor=black,
%pdfauthor={},
%pdftitle={}, 
%pdfsubject={}
breaklinks=true,
plainpages=true
}

\usetikzlibrary{positioning,shapes,shadows,arrows}
\usepackage[mathscr]{euscript} %with mathscr option
\allowdisplaybreaks

\DeclareRobustCommand{\SkipTocEntry}[5]{}

\usepackage{calligra}

%% Display labels
%\usepackage{showlabels}

\DeclareMathAlphabet{\mathcalligra}{T1}{calligra}{m}{n}
\DeclareFontShape{T1}{calligra}{m}{n}{<->s*[2.2]callig15}{}
\newcommand{\scriptr}{\mathcalligra{r}\,}

%% Vertical $\simeq$
\usepackage[all]{xy}
\usepackage{scalerel}
\usepackage{graphicx}

%% Teoremi Proposizioni 

\newtheorem{theorem}{Theorem}

\newtheorem{prop}[theorem]{Proposition}
\newtheorem{lemma}[theorem]{Lemma}
\newtheorem{coro}[theorem]{Corollary}

\newtheorem*{corollary*}{Corollary}
\newtheorem*{theorem*}{Theorem}
\newtheorem*{rmk*}{Remark}
\newtheorem*{proposition*}{Proposition}
\newtheorem*{conjecture*}{Conjecture}
\newtheorem*{deff*}{Definition}
\numberwithin{equation}{section}
\numberwithin{theorem}{section}
\newtheorem{example}[theorem]{Example}
\theoremstyle{remark}
\newtheorem{rmk}[theorem]{Remark}
\newtheorem{notation}[theorem]{Notation}
\newtheorem{construction}[theorem]{Construction}

\theoremstyle{definition}

\newtheorem*{defi*}{Definition}

\newtheorem{deff}[theorem]{Definition}
%\newtheorem*{defi*}[theorem]{Definition}

%% Abbreviazioni Caratteri

%
%  kalligraphische Buchstaben
%

\newcommand{\cC}{{\mathcal C}}

\newcommand{\cE}{{\mathcal E}}

\newcommand{\cL}{{\mathcal L}}
\newcommand{\cM}{{\mathcal M}}
\newcommand{\cN}{{\mathcal N}}

\newcommand{\cP}{{\mathcal P}}

\newcommand{\cR}{{\mathcal R}}
\newcommand{\cS}{{\mathcal S}}

\newcommand{\cW}{{\mathcal W}}

%
%  mathematische Objekte in roman
%

%\newcommand{\hom}{\operatorname{hom}}

%% Altro

\newcommand{{\bull}}{{\scriptscriptstyle{\bullet}}}

%% Operads

\newcommand{\Op}{\mathsf{Op}}
\newcommand{\Cat}{\mathsf{Cat}}
\newcommand{\alg}{\mathsf{Alg}}

%% Dendroidale 

\newcommand{\omg}{\ensuremath{\mathbf{\Omega}}}

\newcommand{\ssets}{\ensuremath{\mathbf{sSets}}}
\newcommand{\dsets}{\ensuremath{\mathbf{dSets}}}

%% SimboliMatematici

%\newcommand{\Hom}{\mathsf{Hom}}
\DeclareMathOperator{\colim}{\mathsf{colim}}

\newcommand{\id}{\text{id}}

%% Cianfrusaglie varie

\newcommand{{\op}}{{{\rm op}}}
\newcommand{{\coop}}{{{\rm coop}}}

%%% to be completed ...

 %% Robe 
\usepackage{pict2e}

%%Start adjunction
\makeatletter
\newcommand{\adjunction}[4]{%
  % #1 : #2 <arrows> #3 : #4
  #1\colon #2%
  \mathrel{\vcenter{%
    \offinterlineskip\m@th
    \ialign{%
      \hfil$##$\hfil\cr
      \longrightharpoonup\cr
      \noalign{\kern-.3ex}
      \smallbot\cr
      \longleftharpoondown\cr
    }%
  }}%
  #3 \noloc #4%
}
\newcommand{\longrightharpoonup}{\relbar\joinrel\rightharpoonup}
\newcommand{\longleftharpoondown}{\leftharpoondown\joinrel\relbar}
\newcommand\noloc{%
  \nobreak
  \mspace{6mu plus 1mu}
  {:}
  \nonscript\mkern-\thinmuskip
  \mathpunct{}
  \mspace{2mu}
}
\newcommand{\smallbot}{%
  \begingroup\setlength\unitlength{.15em}%
  \begin{picture}(1,1)
  \roundcap
  \polyline(0,0)(1,0)
  \polyline(0.5,0)(0.5,1)
  \end{picture}%
  \endgroup
}
\makeatother
%%End adjunction

%\newcommand{\compactlist}[1]{\setlength{\itemsep}{0pt} \setlength{\parskip}{0pt} \setlength{\leftskip}{-0.#1em}}

%\renewcommand{\theenumi}{\roman{enumi}}%
%\renewcommand{\theenumii}{\alph{enumii}}
%\renewcommand{\labelenumi}{{\rm (}{\it \theenumi}\,{\rm )}}
%\renewcommand{\labelenumi}{{\rm\theenumi.}}
%\renewcommand{\labelenumii}{{\rm (}\theenumii{\rm )}}
%\renewcommand{\labelitemi}{$\bullet$}

\begin{document}

\title{The root functor} 

\author{Francesca Pratali}

\begin{abstract}  In this paper, we show that any $\infty$-operad is equivalent to the localization of a discrete $\Sigma$-free operad; this result extend Joyal's delocalization theorem for categories to the operadic setting.  Along the way, we pursue a systematic study of $\infty$-operadic localization in the dendroidal context and its compatibility with un/straightening equivalences, deducing another description of algebras over $\infty$-operads.

\end{abstract}

%\address{F.P.: Institut Galil\'ee, Universit\'e Sorbonne Paris Nord, 99 avenue Jean-Baptiste Cl\'ement, 93430 Villetaneuse, France}
%
%\email{pratali@math.univ-paris13.fr}

\keywords{Localization, $\infty$-operads, dendroidal sets, covariant model structure.}

%\subjclass[2020]{
  %18D15, 18M05, 16T05, 16T15, 19D55, 16E40
%}

\maketitle

\tableofcontents

\section*{Introduction and main results}
\noindent Localization of $\infty$-categories is a central construction in homotopy theory. Given
an $\infty$-category $\cC$ and a class of maps $\cW$, thought of as weak equivalences,
we can construct an $\infty$-category $\cC[\cW^{-1}]$ obtained by freely inverting
the arrows in $\cW$. Combining the results of Barwick--Kan \cite{BK:RCAMHTHT} and Mazel-Gee \cite{MG:URN}, the localization of the nerve of a relative category, i.e. a discrete category with a choice of weak equivalences, induces an equivalence of $\infty$-categories between that of $\infty$-categories and that of relative categories.
Joyal's \emph{delocalization theorem}, formulated in \cite[\S13.6]{J:NQC} and proven
by Stevenson \cite{Ste:CMSSL}, gives a homotopy inverse for this equivalence. It is an explicit construction which makes use of the model of quasicategories. Indeed, to any simplicial set $X$ one has associated its \emph{category of elements} $\Delta/X$ : its objects are the simplices of $X$, that is maps $([n],\alpha\colon \Delta^n \to X)$, and morphisms are commutative triangles. Joyal's delocalization theorem states the following

\begin{theorem*}[Joyal, Stevenson]
	There is a natural transformation of simplicial sets
	$$\ell_X\colon \cN(\Delta/X) \longrightarrow X, \quad ([n],\alpha)\mapsto\alpha(n),$$
	sending each simplex to its last vertex, which is a weak categorical equivalence
	upon localizing $\cN(\Delta/X)$ at the preimage of the identities.
\end{theorem*}

\noindent In the context of \emph{higher algebra}, that is the study of algebraic structures in $\infty$-categories,  of key interests becomes the localization for $\infty$-operads. These latter generalize $\infty$-categories as colored operads, or multicategories, generalize ordinary categories by
allowing operations with multiple inputs \cite{M:TGILS, BV:HIASTS}. In fact, the language of
$\infty$-operads precisely allows to lift this structure to the homotopy-coherent setting,
providing a convenient framework for organizing algebraic structures in modern homotopy
theory \cite{GW:EPOVITII, W:EPOVITI, BdBW:MCHS, KK:IOFEC, Lu:HA}.

\noindent Informally, an $\infty$-operad $\cP$ consists of a collection of objects and, for every
choice of objects $x_1,\dots,x_n,y$, a space, or homotopy type, of operations $\cP(x_1,\dots,x_n;y)$,
together with operadic partial composition laws
$$
\circ_{x_i}\colon 
\cP(x_1,\dots,x_k;y)\times \cP(z_1,\dots,z_m;x_i)
\longrightarrow 
\cP(x_1,\dots,x_{i-1},z_1,\dots,z_m,x_{i+1},\dots,x_k;y),
$$ which are well defined only 'up to homotopy' and
compatible with the symmetric group actions permuting the inputs. We say that $\cP$ is \emph{discrete} if it is $0$-truncated,
i.e.\ if all spaces of operations are sets. The dendroidal formalism
\cite{MW:DS, CM:DSMHO} models $\infty$-operads as certain presheaves on a category $\Omega$
of trees (\cite{MW:DS}), and is in direct analogy with simplicial sets for quasicategories. They are a model for $\infty$-operads as simplicial sets and quasicategories are for  $\infty$-categories (\cite{CM:DSMHO}). The composition $p\circ_{x_1}q$ of two multimorphisms of a quasioperad looks like a section of two natural transformations obtained as

\begin{figure}[h]
	\centering
	
	\adjustbox{scale=0.63,center}{%

\begin{tikzcd}
	&                                                &     &                         &                                                &                                           &  & z_1 &                                                                                        & z_2                                              &  & x_2           &  &                         &                                                                   &     \\
	x_1 \arrow[rd, no head] &                                                & x_2 & z_1 \arrow[rd, no head] &                                                & z_2                                       &  &     & \bullet \ q \ \arrow[rd, "x_1" description, no head] \arrow[lu, no head] \arrow[ru, no head] &                                                  &  &               &  & z_1 \arrow[rd, no head] & z_2                                                               & x_2 \\
	& \bullet \ p \  \arrow[ru, no head] \arrow[d, no head] &     &                         & \bullet \ q \arrow[ru, no head] \arrow[d, no head] & \quad\quad \arrow[rrr, dashed, bend left] &  &     & \quad\quad\quad\quad \arrow[lll]                                                       & \bullet \ p \arrow[rruu, no head] \arrow[d, no head] &  & {} \arrow[rr] &  & {}                      & \bullet \ p\circ_{x_1}q \arrow[u, no head] \arrow[ru, no head] \arrow[d, no head] &     \\
	& y                                              &     &                         & x_1                                            &                                           &  &     &                                                                                        & y                                                &  &               &  &                         & y                                                                 &    
\end{tikzcd}

	}
%\caption{A graphical representation of the operadic composition $\circ_{x_1}$.}\label{figure}
\end{figure}

\noindent The localization of an $\infty$-operad $\cP$ at a class of morphisms $\cW$ is defined by a universal property analogous to that for categories. In particular, the $\infty$-category of $\cP[\cW]^{-1}$-algebras is the full subcategory of $\cP$-algebras
sending the morphisms in $\cW$ to equivalences, the \emph{locally constant algebras}.
This perspective appears naturally in the study of the little $n$-disks operad
$\mathbb{E}_n$: by \cite[Theorem~5.4.5.9]{Lu:HA}, $\mathbb{E}_n$ is equivalent to
the localization of the discrete operad of disks in $\mathbb{R}^n$ via the canonical
map $\cN\mathsf{Disk}_n^\otimes \longrightarrow \mathbb{E}_n$, a foundational result
for factorization homology.
Analogous statements hold for disk operads on stratified
or framed manifolds \cite{AFT:FHSS, KSW:ACFA}, playing a key role in identifying certain colimits and establishing the existence of operadic Kan extensions.

\noindent Our guiding question is the following.

\begin{center}
	\emph{\textbf{Question}. Can we generalize Joyal's delocalization theorem to $\infty$-operads and prove
		that every $\infty$-operad is equivalent to a localization of a discrete one?}
\end{center}

\noindent In this paper, we give a positive answer and expand on the study of
dendroidal localization and its relation with un/straightening theorems, in particular with dendroidal left fibrations. A more precise overview of
our results is given in the next section.

\medskip

\subsection*{Main results}

\noindent The category $\dsets$ of \emph{dendroidal sets} (the necessary preliminaries are in Chapter \ref{section1}) is that of presheaves on the \emph{dendroidal category} $\omg$ : its objects are trees, and maps can be defined via a fully faithful inclusion $\omg\subset\Op$. In particular, there is an inclusion $\Delta\subset \omg$ and a nerve functor $$\cN_d\colon \Op \to \dsets $$ for discrete operads.

\noindent Given a dendroidal set $X$, we construct its \emph{operad of elements} $\omg/X^\otimes$ (Definition \ref{operadofdendrices}), fitting into a commutative diagram of functors
\vspace{-0.2cm}
\[\begin{tikzcd}
	\dsets \arrow[r, "(\omg/-)^\otimes"]                   & \Op \arrow[r, "\cN_d"]                & \dsets                 \\
	\ssets \arrow[u, hook] \arrow[r, "\Delta/"] & \Cat \arrow[u, hook] \arrow[r, "\cN"] & \ssets  . \arrow[u, hook]
\end{tikzcd}
\]

\noindent An object $(T,\alpha)$ of $(\omg/X)^\otimes$ is given by a tree $T$ and $\alpha\colon T \to X$ a representable in $X$ ; it can be thought of as a scheme of composable multimorphisms in $X$ and higher coherences between them. Evaluating an object $(T,\alpha)$ at the root of $T$ yields an assignemt $$\mathsf{Ob}(\omg/X)^\otimes\to X_0 \ (\ast),$$ which we prove to extend to a natural transformation $$ \scriptr_X\colon \cN_d(\omg/X)^\otimes\longrightarrow X,$$ cocontinuously in $X$. We call this map the \emph{root functor} (\Cref{rootfnc}).
In fact, $(\omg/X)^\otimes$ is the maximal suboperad spanned by the representables in the cocartesian operad $(\dsets_{/X})^\sqcup$ $(\omg/X)^\sqcup$ for which the assignment $(\ast)$ extends as just described.  Our main result is the following.

\begin{theorem*}[\ref{main}]\label{theorem:root}
	If $X$ is normal (Definition \ref{def:normal}), the root functor $\scriptr_X$ induces an operadic weak equivalence
	$$\overline{\scriptr_X}\colon \cN_d(\omg/X)^\otimes[\cR^{-1}]\xlongrightarrow{\sim} X$$ where the morphisms in $\cR$ are the preimages of the identities.
	
	\noindent If $X$ is a simplicial set, one recovers Joyal's delocalization theorem, in the sense that ${(\omg/X)^\otimes\simeq \Delta/X}$ and ${\scriptr_X\simeq \ell_X.}$
\end{theorem*}	

\noindent The core of the proof of \Cref{main} lies in the construction of the operad of
elements $(\Omega/X)^\otimes$; once this is in place, the strategy is inspired by the simplicial case
\cite{Ste:CMSSL}, though the dendroidal setting requires several arguments to be carefully refined. Finally, it is natural to ask what happens when $X$ is a symmetric monoidal $\infty$-category, also in light of \cite{A:MRCMMIC} ; this is addressed in Remark~\ref{rmk:smcat}.

\noindent From the delocalization theorem we deduce that in the stable model structure on
dendroidal sets \cite{BN:DSMCS}, any $\infty$-operad $X$ is weakly equivalent to
the infinite loop space of $(\Omega/X)^\otimes$ (Corollary~\ref{westable}).
Specializing to $\infty$-categories recovers the well-known fact that every
$\infty$-category has the weak homotopy type of a nerve, already used in \cite{Wa:AKTSAGT}.

\noindent In the last \Cref{application}, we focus on operadic algebras and operadic left fibrations ; these two, organized in appropriate $\infty$-categories, are well known to be equivalent via the un/straightening equivalences (\cite{He:AOIO}, \cite{BdBM:DSGSSBPQT}, \cite{R:MGCIC}, \cite{K:MEGCDSO}, \cite{P:SUEIO}). More precisely, we consider a localization of dendroidal sets $X\to X[S^{-1}]$ and study the model category structure on $\dsets/X$ obtained as a left Bousfield localization of the covariant model structure (\Cref{construction}). Its fibrant objects are those dendroidal left fibrations $E\to X$ for which a morphism $f\colon x\to y$ in $S$ induces a weak homotopy equivalence of Kan complexes $f_!\colon E_x \to E_y$ between the fibres.  We denote it by $\cL_S\left(\dsets/X\right)$. The main result of Section \ref{application} is the following.

\begin{proposition*}[{\ref{localg}}]
	For every localization map of dendroidal sets $\lambda\colon X \to X[S^{-1}]$, the pullback along $\lambda$ induces a Quillen equivalence of model categories $$ \adjunction{\lambda_!}{\cL_S\left(\dsets/X\right)}{\dsets/X[S^{-1}]}{\lambda^*}.$$ 
\end{proposition*}

\noindent Specifying the result to the case of the nerve of the operad of elements allows to deduce local un/straightening equivalences (of semimodel categories), as we write in Corollary \ref{corofip}.

\addtocontents{toc}{\SkipTocEntry}

\subsection*{Acknowledgements}I thank Gijs Heuts for suggesting this problem and for many valuable discussions,
and Miguel Barata, Victor Carmona, Denis-Charles Cisinski and Ieke Moerdijk for
several fruitful exchanges. I am grateful to my PhD advisor Eric Hoffbeck for his
constant support and for carefully reading my drafts. This project was realized
during my PhD at Paris~13 and a visit to Utrecht, funded by the EOLE grant of
the French-Dutch Network (RFN) and the Marie Skłodowska-Curie grant No~945332 (EU Horizon 2020).
\section{Preliminaries}\label{section1}

\noindent In this section, we recall the necessary preliminary notions and set up notation. Complete proofs and definitions can be found in \cite{HeMo:SDHT}.

\subsection{Discrete operads}

\noindent From now on, we will call \emph{operad} a discrete, strict operad $P$. We write $\Op$ for the category of operads and morphisms between them. Observe that the set $\mathsf{Ob}(P)$ has the structure of a poset, where $c\leq d$ if and only if $P$ has an operation with target $d$ and set of inputs containing $c$. Observe that a morphism of operads $f\colon P \to Q$ induces a morphism of posets $f\colon \mathsf{Ob}(P)\to \mathsf{Ob}(Q)$.

\noindent There is a fully faithful functor $j_!\colon \Cat \to \Op$, where $j_!$ acts by extension by zero, i.e. n the sense that $j_!C(c_1,\dots,c_n;d)=\emptyset$ if $n\neq 1$. \noindent Its right adjoint $j^*\colon \Op \to \Cat$ is called the \emph{underlying category} functor.

\begin{deff}
	An operad $P$ is \emph{$\Sigma$-free} if, for any natural number $n$ and any choice of objects $c_1,\dots, c_n, c$, the symmetric group on $n$-elements $\Sigma_n$ acts freely on the set $	\underset{{\sigma \in \Sigma_n}}{\bigcup} P(c_{\sigma(1)},\dots,c_{\sigma(n)};c).$
\end{deff}

\noindent Given a symmetric monoidal category $(\mathbb{V},\otimes)$, one forms the operad
$\mathbb{V}^\otimes$ with operations $\mathbb{V}^\otimes(x_1,\dots,x_n;y)=
\mathbb{V}(x_1\otimes\cdots\otimes x_n; y)$. A $P$-algebra in $\mathbb{V}$ is
then a morphism of operads $P\to \mathbb{V}^\otimes$.

\noindent Denote by $\alg_P(\mathbb{V})$ the category of $P$-algebras in $\mathbb{V}$ and morphisms between them. Observe that for a category $C$, we have a natural identification $\alg_{j_!C}(\mathbb{V})=\mathsf{Fun}(C,\mathbb{V})$.

\subsection{The dendroidal category}\label{cat:omg}  Let $\Delta$ be the category of finite linear orders ${[n]=\{0<1< \dots < n\}}$, $n\geq 0$, and morphisms the maps of posets. Simplicial sets $\ssets$ are the category of presheaves over $\Delta$. There is a fully faithful functor $\Delta\to \Cat$, which by restricted Yoneda embedding induces the nerve functor $\cN\colon \Cat \to \ssets$.

\noindent The dendroidal category $\omg$ has objects non planar finite rooted trees.

\adjustbox{scale=0.6,center}{
	\begin{tikzcd}
		& && {}&&&&&&&\\
		&& {} \arrow[rd, no head] & \bullet \arrow[u, no head] &{}&&{} \arrow[rd, no head]&{}&{}&& {} \arrow[dd, no head] \\
		T = & {} & {} \arrow[rd, no head] & \bullet \arrow[ru, no head] \arrow[d, no head] \arrow[u, no head]& \bullet & {} & C_3 \ =& \bullet \arrow[ru, no head] \arrow[u, no head] \arrow[d, no head] & & \eta \ = &\\
		& && \bullet \arrow[d, "r_T", no head] \arrow[ru, no head] \arrow[llu, no head] \arrow[rru, no head] &&&                        & {}&&&{} \\
		&   &                        & {}                                                                                              &         &        &                        &                                                                   &     &          &                       
\end{tikzcd}}
\vspace*{-.3cm}

\captionof{figure}{Some typical trees in $\omg$.}

\noindent Any such tree $T$ yields an operad $\Omega(T)$, whose set of objects is the set of edges of $T$, and where $\Omega(T)(e_1,\dots,e_n;e)=\{\ast\}$ if and only if there exists a subtree of $T$ with leaves $\{e_1,\dots,e_n\}$ and root $e$ (necessarily unique), and it is empty otherwise. The operadic composition corresponds to \emph{grafting} of subtrees, which means successive identifications of the root of a subtree with a leaf of another. Morphisms in the category $\omg$ are defined by declaring $\Omega(-)\colon \omg\to \Op$ a fully faithful embedding.

\noindent There is a fully faithful embedding $\Delta\to \omg$, which looks like this:
\begin{center}
	\adjustbox{scale=0.7,center}{
		\begin{tikzcd}
			&                   &                            &    & {}                                                                &            \\
			\Delta \ \ni & {[2] = \{0<1<2\}} & {} \arrow[r, "i", maps to] & {} & \quad \quad \bullet^{\ 0<1} \arrow[u, "0"', no head] \arrow[d, "1 ", no head] & \in \ \omg \\
			&                   &                            &    & \quad \quad \bullet^{\ 1<2} \arrow[d, "2", no head]                           &            \\
			&                   &                            &    & {}                                                                &           
	\end{tikzcd}}
	
	\vspace*{-.3cm}
	
	\captionof{figure}{The ordinal $[2]$ is realized as the linear tree with $3$ edges and $2$ unary vertices.}
\end{center}
\noindent It is customary, and we adopt this convention here as well, to denote by $\eta$ the image of the ordinal $[0]$ under the above inclusion. It is the unique tree with no vertex.

\noindent Dendroidal sets $\dsets$ are the category of presheaves on $\omg$. The restricted Yoneda embedding yields the fully faithful functor $\cN_d\colon \Op \to \dsets$, called the \emph{dendroidal nerve functor}. Explicitly, we have $\cN_d (P)\coloneqq  \Op(\Omega(-), P)$.

\noindent The diagram on the left commutes, making the one the right commute as well. 
\begin{center}
\begin{minipage}{0.45 \textwidth}
	\[
	\begin{tikzcd}
		\Cat \arrow[d] & \Delta \arrow[l] \arrow[d] \\
		\Op            & \omg \arrow[l]            
	\end{tikzcd}
	\]
\end{minipage}
\hfil
\begin{minipage}{0.45 \textwidth}
	\[
		\begin{tikzcd}
		\Cat \arrow[d] \arrow[r, "\cN"] & \ssets \arrow[d, "i_!"] \\
		\Op \arrow[r, "\cN_d"']          & \dsets            \ ,  
	\end{tikzcd}
	\]
\end{minipage}
\end{center}
\noindent where $i_!$ is the left Kan extension of $i\colon \Delta\to \omg$ along the Yoneda embedding.

\noindent There is a natural isomorphism $\Delta\simeq \omg/\eta$, which yields the isomorphism $\dsets/\eta\simeq \ssets$, under which the inclusion $i_!$ is the forgetful functor $\dsets/\eta\to \dsets$. More generally, for any simplicial set $M$, one has $\dsets/i_!M \simeq \ssets/M$.

 \subsubsection{Dendroidal terminology}
 
 \noindent In a tree $T $ of $\omg$, every vertex has a single output edge and $n$ input edges for some $n\geq 0$ (the \emph{arity} of $v$). The root is the unique edge without an input vertex, and we denote it by $r_T$. The \emph{leaves} of $T$ are those edges which have no output vertex.
 A vertex is \emph{external} if its input edges are contained in the leaves of $T$ or if it is a \emph{stump}, that is, a vertex with no output edge. We denote by $C_n$ the the essentially unique tree with one vertex and $n$ leaves, and call it the \emph{$n$-corolla}. Let $\eta$ be the unique tree with no vertex.
 
  \noindent Denote by $E(T)$ the set of edges of $T$, the objects of the operad $\Omega(T)$. We rephrase the poset structure on $E(T)$ as follows: for two edges $e,f$, one has $e \leq f $ if and only if the (unique) path from $e$ to the root of $T$ contains $f$. In particular, the root $r_T$ is the unique maximal element, while the minimal elements are the leaves of $T$ and the input edges of stumps.

\noindent We say that $S$ is a subtree of $T$ if it can be obtained from $T$ by successively pruning away external vertices and the outer edges attached to them from $T$. 

\noindent Given two trees $R,S$ and a leaf $\ell$ of $S$, the \emph{grafting} of  $R$ onto $S$ along $\ell$ is the tree $S\cup_\ell R$ given by the pushout
 \begin{center}
\begin{tikzcd}
	\eta \arrow[d, "r_R"'] \arrow[r, "\ell"] & S \arrow[d]  \\
	R \arrow[r]                              & S\cup_\ell R
\end{tikzcd}
 \end{center}
 where both arrows $R\to S\cup_\ell R \leftarrow S$ are inclusions of subtrees.
 
 \noindent Any tree $T$ is ismorphic to a grafting of subtrees, the indecomposable elements being the $n$-corollas and the trivial tree $\eta$.
 
\noindent For the scope of this section, we denote by $\Omega[T]$ the representable dendroidal set associated to a tree $T$. Observe that there is a natural isomorphism of dendroidal sets $$ \Omega[T]\simeq \cN_d(\Omega(T)).$$

\subsection{Tensor product of dendroidal sets}\label{tensor}

\noindent An homotopy equivalence of maps of dendroidal sets $f,g\colon X\to Y$ is modeled by considering the dendroidal tensor product with the interval (i.e. the $1$ corolla $C_1 \simeq [1]$) $h\colon X\otimes C_1 \to Y$ with the usual restrictions $h_0=f$, $h_1=g$. 

\noindent Tensor product of dendroidal sets is the bifunctor $$ \otimes \colon \dsets \times \dsets \longrightarrow\dsets$$ defined as the two variable left Kan extension along the Yoneda embeddings under the bifunctor $\omg\times\omg\longrightarrow\dsets, \quad (T,S)\mapsto\cN_d(\Omega(T)\otimes_{BV }\Omega(S)),$ where $\otimes_{BV}$ denotes the Boardman-Vogt tensor product of discrete operads (\cite{BV:HIASTS}). Restricted to simplicial sets, the tensor product is just the cartesian product.

\noindent The BV tensor product of discrete operads is characterized by the universal property that are equivalences of categories $$ \alg_Q(\alg_P(\ssets))\simeq \alg_{P\otimes_{BV} Q}(\ssets)\simeq \alg_P(\alg_Q(\ssets)),$$  for any two operads $P$, $Q$, and the product for algebras is the objectwise cartesian product. The operad $P\otimes Q$ is a quotient of $P\times Q$ under some relations, notably the \emph{interchange relation}: $$ (p\otimes y)\circ(c_1\otimes q,\dots,c_n\otimes q)= \sigma_{n,m}^* (d\otimes q)\circ(p\otimes d_1,\dots,p\otimes d_m),$$ where $\circ$ denotes the total operadic composition and the permutation $\sigma_{n,m}$ is the unique element of $\Sigma_{nm}$ making sense of the above formula.

\noindent  Let us now illustrate with an example our case of interest, the tensoring with the $1$-corolla.

\begin{example}\label{ex:bv}
Consider the operads

\adjustbox{scale=0.7,center}{
\begin{tikzcd}[column sep={1.2cm,between origins}]
	e_1 & e_2 & e_3 & \quad & 0 \arrow[d, no head] \\
	\Omega(C_3) \ = & \bullet \ p \arrow[ul, no head] \arrow[u, no head] \arrow[ur, no head] \arrow[d, no head] & 
	& \Omega(C_1) \ = & \ \circ \ q \arrow[d, no head] \\
	& r & & & 1
\end{tikzcd}
}

\noindent The Boardman-Vogt interchange relation for $\Omega(C_3)\otimes \Omega(C_1)$ yields the identification:
\vspace{0.5cm}

\adjustbox{scale=0.6,center}{
\begin{tikzcd}
	{(e_1,0)} & {(e_2,0)}                                                                                                                                    & {(e_3,0)} &   & {(e_1,0)}                                                                                                & {(e_2,0)}                                                                                              & {(e_3,0)}                                                                                               \\
	& \quad \quad \quad \bullet \ p\otimes 0 \arrow[u, no head] \arrow[ru, no head] \arrow[lu, no head] \arrow[dd, "{(r,0)}" description, no head] &           &   & \quad \quad \quad \circ \ e_1 \otimes q \arrow[u, no head] \arrow[rdd, "{(e_1,1)}" description, no head] & \quad \quad \quad \circ \ e_2\otimes q \arrow[u, no head] \arrow[dd, "{(e_2,1)}" description, no head] & \quad \quad \quad \circ \ e_3\otimes q \arrow[u, no head] \arrow[ldd, "{(e_3,1)}" description, no head] \\
	&                                                                                                                                              &           & \text{{\huge $=$}} &                                                                                                          &                                                                                                        &                                                                                                         \\
	& \quad \quad \quad \circ \ r\otimes q                                                                                                         &           &   &                                                                                                          & \quad\quad \bullet \ p\otimes 1 \arrow[d, no head]                                                     &                                                                                                         \\
	& {(r,1)} \arrow[u, no head]                                                                                                                   &           &   &                                                                                                          & {(r,1)}                                                                                                &                                                                                                        
	\end{tikzcd}} 
\vspace*{-.3cm}

\captionof{figure}{The Boardman-Vogt interchange relation for $\Omega(C_3)\otimes\Omega(C_1)$.}
\end{example}

\begin{rmk}
The reader should be aware that this tensor product does \emph{not} endow dendroidal sets with a monoidal structure, as it fails to be associative up to coherent isomorphism. The tensor product is, however, associative up to coherent homotopies \cite[\S 4.4]{HeMo:SDHT}, and endows the $\infty$-category of dendroidal $\infty$-operads with a symmetric monoidal structure, see \cite[\S 5]{HM:OELIODIO}. 
\end{rmk}

\noindent The tensor product admits a right adjoint in each variable, and we denote by $\hom(-,-)$ the underlying simplicial set of this internal hom functor, that is, $$ \hom\colon \dsets^\text{op}\times\dsets\longrightarrow\ssets \quad (X,Y)\mapsto \hom(X,Y)_\bullet \simeq  \dsets(i_!(\Delta^\bullet)\otimes X, Y).$$

\subsection{Dendroidal homotopy theory} \label{ooopds} \label{homtheory}The homotopy theory of dendroidal $\infty$-operads was defined in \cite{MW:DS} and \cite{CM:DSMHO} by extending Joyal’s theory of $\infty$-categories as quasicategories. We will not need explicit definition of the following fundamental notions, which we will treat syntetically:

\begin{itemize}
	\item Analogously to the description of the morphisms in $\Delta$ as generated by faces and degeneracies under the simplicial identities, morphisms in $\omg$ are also generated (\cite[\S 3.3.4]{HeMo:SDHT}) by dendroidal faces and degeneracies, together with isomorphisms of trees, which satisfy the \emph{dendroidal identities}. 
	
	\item Some of the notions of the homotopy theory of simplicial sets can be formulated in the context of dendroidal sets. In particular, one can talk about the \emph{boundary} of a tree and  inner and external horns of a tree. Inner horns are relative to inner edges, while external horns divide into to classes, relative respectively to leaf vertices or root vertices. Contrarily to left and right horns in simplicial sets, root and leaf horns are not dual to each other, as the category $\omg$ does not admit a self duality.
\end{itemize}

\begin{deff}\label{def:normal}
	A dendroidal set $X$ is a \emph{dendroidal $\infty$-operad} if it has the right lifting property against inner horn inclusions of trees.

	\noindent A dendroidal set $X$ is \emph{normal} if, for any tree $T$, the action of $\mathsf{Aut}(T)$ on the set $X_T$ is free.
	
	\end{deff}

\noindent Observe that, if $X\simeq \cN_d P$, then $X$ is normal if and only if $P$ is $\Sigma$-free.

\begin{theorem}[{\cite{CM:DSMHO}}] There exists a model structure on the category $\dsets$ of dendroidal sets, called the \emph{operadic model structure}, with the following properties:
	
	\begin{enumerate}
		\item The cofibrations are the normal monomorphisms.
		\item  The fibrant objects are the dendroidal $\infty$-operads.
		\item A map between normal dendroidal sets is a weak equivalence if and only if for every dendroidal $\infty$-operad $X$, the map $$ \hom(B,X)\longrightarrow\hom(A,X)$$ is a categorical equivalence of $\infty$-categories.
	\end{enumerate} Moreover, this model structure is left proper and cofibrantly generated.
\end{theorem}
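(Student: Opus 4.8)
The plan is to realise this model structure as an instance of Cisinski's machinery for homotopical structures on presheaf toposes \cite{C:HCHA}, applied to $\dsets$ with the normal monomorphisms as designated cofibrations; this is the route followed in \cite{CM:DSMHO}. First I would check that the normal monomorphisms behave as a class of cofibrations: they are closed under pushout, transfinite composition and retract, and they form the saturation of the \emph{set} of boundary inclusions $\{\partial\Omega[T]\hookrightarrow\Omega[T]\mid T\in\omg\}$ (each of these is a normal monomorphism because representable dendroidal sets are normal); see \cite{HeMo:SDHT}. A small object argument then furnishes functorial factorisations of any map as a normal monomorphism followed by a map having the right lifting property against every normal monomorphism, so the cellular model is in place.

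Next I would fix the exact cylinder. Let $E$ be the groupoid with two objects $0,1$ and a unique isomorphism between them, and put $J\coloneqq i_!\cN(E)\in\dsets$. Together with the two endpoint inclusions $\partial J\rightrightarrows J$, where $\partial J=\Omega[\eta]\sqcup\Omega[\eta]$, and the projection $J\to\Omega[\eta]$, the endofunctor $J\otimes(-)$ is an exact cylinder compatible with normal monomorphisms: the pushout-product of a normal monomorphism with $\partial J\hookrightarrow J$ is again a normal monomorphism, by the behaviour of the Boardman-Vogt tensor product on normal monomorphisms recalled in \cite{HeMo:SDHT}. Let $\mathrm{An}$ denote the class of $J$-anodyne extensions generated by the inner horn inclusions $\Lambda^eT\hookrightarrow\Omega[T]$, in the sense of \cite{C:HCHA}; it is the saturation of a set, and it contains in particular the pushout-products of $\partial J\hookrightarrow J$ with the generating cofibrations. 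Cisinski's existence theorem then produces a cofibrantly generated, left proper model structure on $\dsets$ whose cofibrations are the normal monomorphisms, whose trivial cofibrations are exactly $\mathrm{An}$, whose fibrant objects are the dendroidal sets with the right lifting property against $\mathrm{An}$, and for which a map $A\to B$ between cofibrant objects is a weak equivalence precisely when it induces a bijection on $J$-homotopy classes $[B,X]\to[A,X]$ for every fibrant $X$. This already yields assertion (1), together with left properness and cofibrant generation; it remains to identify the fibrant objects and to reformulate the criterion for weak equivalences.

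The crux, and the step I expect to be the main obstacle, is to prove that \emph{every dendroidal $\infty$-operad is fibrant}, i.e. has the right lifting property against all of $\mathrm{An}$ and not merely against the inner horns. Since maps of the form $\Omega[\eta]\hookrightarrow J$ increase the set of colours, they are not inner anodyne, so this does not reduce to a formal manipulation of saturated classes: one must verify by hand that every generator of $\mathrm{An}$ lifts against an arbitrary dendroidal $\infty$-operad $X$. This is the dendroidal analogue of the delicate combinatorial heart of Joyal's theory of quasicategories. It is carried out by organising the relevant tensor products of representable trees into filtrations by normal monomorphisms whose successive stages are pushouts of inner horn inclusions, or by building the needed extensions directly from the inner-horn fillings in $X$ and the contractibility of $J$; enumerating and ordering the nondegenerate cells of $J\otimes\Omega[T]$ and its kin is where essentially all of the work lies, with tree automorphisms contributing bookkeeping but no new idea. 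Granting this, the right lifting property against $\mathrm{An}$ is equivalent to the right lifting property against the inner horns, so the fibrant objects are exactly the dendroidal $\infty$-operads, which is assertion (2).

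Finally, for the characterisation of weak equivalences in (3), I would first record --- again by the filtration arguments above, and as in \cite{HeMo:SDHT} --- that whenever $X$ is a dendroidal $\infty$-operad the functor $\hom(-,X)$ sends normal monomorphisms to inner fibrations of simplicial sets and elements of $\mathrm{An}$ to trivial fibrations; in particular $\hom(A,X)$ is then a quasicategory for every normal $A$, and $\Hom(A,X)$ is again a dendroidal $\infty$-operad. Using the symmetry of $\otimes$ and the adjunction isomorphism $\dsets(i_!K\otimes A,X)\cong\ssets(K,\hom(A,X))$, which matches $J$-homotopies with $\cN(E)$-homotopies, the Cisinski criterion applied to the fibrant objects $\Hom(i_!K,X)$ shows that $A\to B$ is a weak equivalence if and only if $[K,\hom(B,X)]\to[K,\hom(A,X)]$ is a bijection for every simplicial set $K$ and every dendroidal $\infty$-operad $X$. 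Since, by Joyal's theory, a map between quasicategories is a categorical equivalence exactly when it induces such bijections on homotopy classes out of all simplicial sets, this says precisely that $\hom(B,X)\to\hom(A,X)$ is a categorical equivalence for every dendroidal $\infty$-operad $X$, which is assertion (3).
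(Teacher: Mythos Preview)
The paper does not prove this theorem: it is stated as a background recollection with a citation to \cite{CM:DSMHO} and no proof is given, so there is no ``paper's own proof'' to compare against. Your sketch follows the route actually taken in \cite{CM:DSMHO} (Cisinski's homotopical-structure machinery with the $J$-cylinder and inner horns as generating anodynes), and you have correctly isolated the genuinely hard step, namely that an arbitrary inner-Kan dendroidal set lifts against the full class of $J$-anodyne maps and not merely the inner horns.

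One imprecision worth flagging: you write that the resulting model structure has ``trivial cofibrations exactly $\mathrm{An}$''. In Cisinski's framework the class of anodyne extensions is in general only contained in the trivial cofibrations, not equal to it; what one does get is that the \emph{fibrant objects} are characterised by the right lifting property against $\mathrm{An}$, and that naive fibrations (maps with RLP against $\mathrm{An}$) between fibrant objects are fibrations. This does not affect the rest of your argument, since your identification of the fibrant objects and of the weak equivalences between cofibrant objects only uses those facts. A second small point: your final paragraph quantifies over all simplicial sets $K$, but to detect categorical equivalences via $J$-homotopy classes one should restrict to $K$ cofibrant in the Joyal model structure (which is automatic, all simplicial sets being cofibrant there), and one uses that $\hom(A,X)$ is already a quasicategory; you do say this, so the argument goes through.
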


\noindent The homotopy theory of dendroidal left fibrations, which will appear in \Cref{application}, was first defined in \cite{He:AOIO} and generalizes that for left fibration of simplicial sets. It models $\infty$-operads cofibred in groupoids.

\begin{deff}A morphism of dendroidal sets $p\colon Y \to X$ is a \emph{dendroidal left fibration} if it has the right lifting properties against inner and leaf horn inclusions of trees.
\end{deff}

\begin{notation}
	For a morphism of dendroidal sets $p\colon Y \to X$ and an object $x$ of $X$, we denote by $Y_x$ the fibre over $x$, that is, $$ Y_x\simeq p^{-1}(x).$$ If $p$ is a dendroidal left fibration, $Y_x$ is a Kan complex (\cite[Remark 9.60]{HeMo:SDHT}).
\end{notation}

\begin{theorem}[\cite{He:AOIO}]
	\noindent Let $X$ be a dendroidal set. The category $\dsets/X$ carries a left proper cofibrantly generated model structure, called the \emph{covariant model structure}, with the following properties:
	\begin{enumerate}
		\item The cofibrations are the normal monomorphisms over $X$.
		\item The fibrant objects are the dendroidal left fibrations over $X$.
		\item A map $(A,u)\to (B,v)$ between normal objects over $X$ is a weak equivalence if and only if for any dendroidal left fibration $(Y,p)$, the map $$ \hom_X(B,Y)\longrightarrow \hom_X(A,Y)$$ is a weak homotopy equivalence of Kan complexes, where $$ \hom_X(A,Y)= \hom(A,Y)\times_{\hom(A,X)}\{u\}.$$
		\item A map $(Y,p)\to (Z,q)$ between dendroidal left fibrations is a weak equivalence if and only if, for any object $x$ of $X$, the induced map $Y_x\to Z_x$  between the fibres over $x$ is a weak homotopy equivalence of Kan complexes.
	\end{enumerate}
	\noindent Moreover, if $f\colon X \to Y$ is a map of dendroidal sets, the adjunction $$ \adjunction{f_!}{\dsets/X}{\dsets/Y}{f^*}$$ is a Quillen adjunction with respect to the covariant model structure, which is a Quillen equivalence when $f$ is an operadic weak equivalence.
\end{theorem}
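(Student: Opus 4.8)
This theorem bundles the existence of the covariant model structure (items (1)--(2), with left properness and cofibrant generation), the characterisations of its weak equivalences (items (3)--(4)), and the base-change statement; I sketch each, flagging the hard step. For existence I would use that $\dsets=\mathsf{PSh}(\omg)$ is a presheaf topos, so the slice $\dsets/X$ is again a presheaf topos --- namely presheaves on the category of elements of $X$ --- hence locally presentable, and apply Cisinski's recognition theorem for combinatorial model structures on presheaf categories \cite{C:HCHA}. The data to feed in: the normal monomorphisms over $X$ as cofibrations, a cofibrantly generated class (generated, up to saturation, by the boundary inclusions $\partial\Omega[T]\hookrightarrow\Omega[T]$ together with all structure maps to $X$, modulo the usual care about nontrivial tree automorphisms); a suitable cylinder functor on $\dsets/X$ coming from a dendroidal interval; and, as the set of maps to be made trivial cofibrations, the inner horn inclusions $\Lambda^eT\hookrightarrow\Omega[T]$ and the leaf horn inclusions $\Lambda^vT\hookrightarrow\Omega[T]$, again with all structure maps to $X$. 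The output is a left proper, cofibrantly generated model structure whose cofibrations are the normal monomorphisms over $X$ and whose fibrant objects are precisely the objects having the right lifting property against those horns, i.e.\ the dendroidal left fibrations over $X$; this settles (1)--(2).

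For (3), in such a localised structure a map $(A,u)\to(B,v)$ between cofibrant --- that is, normal --- objects is a weak equivalence exactly when it is inverted by every fibrant object, i.e.\ induces a weak homotopy equivalence on derived mapping spaces into each dendroidal left fibration $(Y,p)$ over $X$. That this derived mapping space is the claimed $\hom_X(A,Y)$ follows from the auxiliary lemma that $\Hom(A,Y)\to\Hom(A,X)$ is itself a dendroidal left fibration when $A$ is normal and $Y\to X$ is, so its fibre $\hom_X(A,Y)$ over $u$ is a Kan complex which already computes the derived mapping space. Non-normal objects are dealt with by functorial normalisation.

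Item (4) is where the real work lies. The implication ``covariant equivalence $\Rightarrow$ fibrewise weak homotopy equivalence'' is formal: for an object $x$ of $X$, the fibre functor $(-)_x$ is the right adjoint of the Quillen adjunction $x_!\dashv x^*$ (base change along $x\colon\Omega[\eta]\to X$), hence preserves fibrant objects --- carrying dendroidal left fibrations to Kan complexes --- and weak equivalences between them. The converse --- that a fibrewise weak homotopy equivalence between dendroidal left fibrations over $X$ is a covariant equivalence --- is the main obstacle. I would attack it by a double induction: over the skeletal filtration of $X$, using a gluing/descent property for dendroidal left fibrations along the pushouts that attach the cells $\partial\Omega[T]\hookrightarrow\Omega[T]$, so as to reduce to a representable base $\Omega[T]$; and, over $\Omega[T]$, via the grafting pushouts to the cases $T=\eta$ --- where the covariant model structure is the Kan--Quillen one and the assertion is classical --- and $T$ a corolla, handled directly. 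The content at a representable base is an explicit description of dendroidal left fibrations over $\Omega[T]$, morally a Grothendieck construction presenting them, up to covariant equivalence, by $\Omega(T)$-algebras in spaces assembled from the fibres, under which a fibrewise weak homotopy equivalence becomes a levelwise equivalence of such algebras; this is the genuinely dendroidal input, the combinatorics of trees and leaf horns entering precisely here (the argument following \cite{He:AOIO}). Left properness is either extracted from the Cisinski construction or verified directly via the gluing lemma that cobase change of a covariant equivalence along a normal monomorphism is a covariant equivalence.

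For the base-change statement, $f^*$ is base change along $f$, and the right lifting properties defining covariant fibrations and trivial fibrations are stable under pullback, so $f^*$ preserves both; equivalently, $f_!$ --- post-composition with $f$, which leaves underlying dendroidal sets unchanged --- preserves normal monomorphisms and carries the chosen generating trivial cofibrations over $X$ to ones over $Y$. Hence $f_!\dashv f^*$ is a Quillen adjunction. When $f$ is an operadic weak equivalence, to upgrade this to a Quillen equivalence I would check that the derived unit and derived counit are covariant equivalences; by the fibrewise criterion (4) each reduces to a comparison of fibres. The cleanest way to finish is through the dendroidal straightening equivalence of \cite{He:AOIO}, which identifies the covariant homotopy category over a dendroidal $\infty$-operad with that of space-valued algebras over it and matches $f_!\dashv f^*$ with the (left Kan extension, restriction) adjunction on algebras --- an equivalence whenever $f$ is an operadic weak equivalence, since equivalent $\infty$-operads have equivalent categories of algebras. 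This last point, together with the converse in (4), constitutes the technical core and is where the homotopy theory of dendroidal $\infty$-operads is genuinely used.
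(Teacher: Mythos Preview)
The paper does not prove this theorem: it is stated in the ``Dendroidal recollections'' section as a result quoted from \cite{He:AOIO} (and the base-change Quillen equivalence is later invoked via \cite[Proposition 2.4]{He:AOIO}), with no argument given. There is therefore nothing to compare your proposal against. Your sketch is a reasonable outline of the actual proof in Heuts' paper and in \cite{HeMo:SDHT}, but for the purposes of this paper the statement functions as a black-box citation, not something to be reproved.
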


\noindent Slicing over the point, that is under the equivalence $\ssets\simeq\dsets/\eta$, the operadic model structure yields the Joyal model structure, and the covariant model structure yields the homonimous one for left fibrations over a simplicial set.

\noindent The functor $\hom(-,-)$ homotopy-enriches $\dsets$ over simplicial sets with the
Joyal model structure \cite[Prop.~9.28]{HeMo:SDHT}, and $\hom_X(-,-)$ enriches
$\dsets/X$ over the Kan--Quillen model structure \cite[Prop.~9.66]{HeMo:SDHT}.
In particular, both give models for the respective mapping spaces when computed on bifibrant objects.

\subsection{Last conventions}\label{subs:last}When in presence of the canonical fully faithful functors $i\colon \Delta\to \omg$,  $i_!\colon \ssets \to \dsets$, $\omg \to \Op \to \dsets$. In particular, given a tree $T$, we still write $T$ for the operad $\Omega(T)$ and for the dendroidal set $\Omega[T]$.  Observe that there are  isomorphisms of dendroidal sets $\eta\simeq \Delta^0$ and $C_1\simeq \Delta^1$, but that the analogy stops here, as for $n\neq 1$, there is not even a map $C_n\to \Delta^n$. We will freely use any of these functors explicitly whenever it is convenient or helps to emphasize a point.

\section{Localization of dendroidal $\infty$-operads}\label{sectionOnloc}

\noindent We now define derived localization of dendroidal sets, in a way that extends that of quasi-categories in the sense of Joyal and Lurie (\cite{Lu:HA}). We then construct a model for normal dendroidal sets in \Cref{modeloc}. For compatibility of localization of dendroidal sets with the covariant model structure we direct the reader to \Cref{section4}.

\subsection{The definition of localization}\label{defloc}
\noindent Let us start by some preliminary

\begin{deff}
	A \emph{normalization} of a dendroidal set $X$ is a trivial fibration $X'\xrightarrow{\sim} X$ in the operadic model structure, with $X'$ normal.
	
	\noindent Given a morphism of dendroidal sets $f\colon X \to Y$, a \emph{normalization} of $f$ is a commutative diagram
	 \[ 
	\begin{tikzcd}
		X' \arrow[d, "\wr"'] \arrow[r, "f'"] & Y' \arrow[d, "\wr"] \\
		X \arrow[r, "f"] & Y
	\end{tikzcd}\] where both vertical arrows are normalizations.
\end{deff}

\noindent Normalizations exist and are unique up to operadic weak equivalence. As explained in  \cite[Remark 9.21]{HeMo:SDHT}, an explicit construction of a normalization with contractible fibres is given by the projection $X\times W^* \cE\to X$, where $\cE$ is the simplicial Barratt-Eccles operad and $W^*$ is the operadic homotopy coherent nerve functor, right adjoint to $W_!$.

\begin{deff}Let $\lambda\colon X \to Y$ be a morphism between normal dendroidal sets and let $S\subseteq X_{C_1}$ be a subset of $1$-morphisms. The map $\lambda$ is a \emph{localization of $X$ at the set of morphisms $S$} if, for any dendroidal $\infty$-operad $Z$, the morphism between the simplicial hom objects $$\hom(Y,Z)\longrightarrow \hom(X,Z) $$ is fully faithful, with essential image given by those maps $X\to Z$ sending $S$ to equivalences.
	
\noindent If $\lambda\colon X \to Y$ is a morphism between non-necessarily normal dendroidal sets, we say that $\lambda$ is a localization if any, or equivalently one, normalization $\lambda'$ of $\lambda$ is. 

\end{deff}

\noindent Localization is unique up to operadic weak equivalence, and we denote by $X[S^{-1}]$ 'the' localization.

\noindent  Observe that, if we denote by $\hom_S(X,Z)$ the full sub simplicial set of $\hom(X,Z)$ spanned by those maps $X\to Z$ sending $S$ to equivalences in $Z$, the universal property of the localization allows to identify $\hom(X[S^{-1}],Z)$ with $\hom_S(X,Z)$.

\begin{rmk}
	It is immediate to see that the definition applied to a morphism of dendroidal sets recovers the localization of quasi-categories in the sense of  \cite[Definition 1.3.4.1]{Lu:HA}.
\end{rmk}

\noindent We can construct an explicit model for the localization of a normal dendroidal set. 
 
\begin{prop}\label{modeloc} 
Denote by $J$ be the nerve of the connected groupoid on two objects. 
Given a normal dendroidal set $X$ and a subset $S\subseteq X_{C_1}$ of $1$-morphisms in $X$, the localization of $X$ at $S$ is realized by the map $\lambda\colon X\to \cL(X,S)$ defined by the pushout diagram
\begin{center}
\begin{equation}\label{local}
\begin{tikzcd}
{\underset{s\in S}{\bigsqcup} C_1} \arrow[d] \arrow[r] & X \arrow[d, "\lambda"] \\
\underset{s\in S}{\bigsqcup} J \arrow[r]                       & {\cL(X,S)}    \ .
\end{tikzcd}
\end{equation}
\end{center}
\end{prop}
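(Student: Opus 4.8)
The plan is to verify the universal property of localization directly for the map $\lambda\colon X\to \cL(X,S)$ defined by the pushout square \eqref{local}, using the fact that $J$ is a model for the localization of $C_1=\Delta^1$ at its unique non-degenerate $1$-morphism. First I would record the standard input: for any $\infty$-operad $Z$, restriction along the inclusion $C_1\hookrightarrow J$ induces a trivial fibration $\hom(J,Z)\to \hom(C_1,Z)^{\mathrm{eq}}$ onto the full sub-simplicial set of $\hom(C_1,Z)\simeq Z_{C_1}$ spanned by the equivalences of $Z$; equivalently, $C_1\to J$ is a localization in the sense just defined (this is essentially \cite[Definition 1.3.4.1]{Lu:HA} read in $\ssets$, transported to $\dsets/\Omega[\eta]$, or can be deduced from $J$ being a contractible Kan-ish object receiving $C_1$). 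This is the only nontrivial ``atomic'' localization we need.

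Next I would apply $\hom(-,Z)$ to the pushout square \eqref{local}. Since $\hom(-,Z)\colon \dsets^{\op}\to \ssets$ sends colimits to limits, and since $\hom$ commutes with disjoint unions in the first variable, I obtain a pullback square of simplicial sets
\begin{center}
\begin{tikzcd}
\hom(\cL(X,S),Z) \arrow[d] \arrow[r] & \hom(X,Z) \arrow[d] \\
\prod_{s\in S}\hom(J,Z) \arrow[r] & \prod_{s\in S}\hom(C_1,Z)\ .
\end{tikzcd}
\end{center}
By the previous paragraph the left vertical map is a trivial fibration onto the full subobject $\prod_{s\in S}\hom(C_1,Z)^{\mathrm{eq}}\subseteq \prod_{s\in S}\hom(C_1,Z)$ cut out by requiring each $s$ to go to an equivalence; moreover this map, being a base change of $\hom(J,Z)\to \hom(C_1,Z)$, is a trivial Kan fibration, hence in particular its pullback along $\hom(X,Z)\to \prod_s \hom(C_1,Z)$ (the bottom-then-right composite reads off the images of the morphisms in $S$) is again a trivial Kan fibration onto the preimage of $\prod_s\hom(C_1,Z)^{\mathrm{eq}}$. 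That preimage is exactly $\hom_S(X,Z)$, the full sub-simplicial set of maps $X\to Z$ sending $S$ to equivalences. Chasing the pullback square, the top map $\hom(\cL(X,S),Z)\to \hom(X,Z)$ therefore factors as a trivial fibration $\hom(\cL(X,S),Z)\xrightarrow{\sim}\hom_S(X,Z)$ followed by the inclusion $\hom_S(X,Z)\hookrightarrow\hom(X,Z)$. A trivial Kan fibration that is also, up to this identification, a monomorphism onto a full subobject is in particular fully faithful (as a map of $\infty$-categories) with the prescribed essential image, which is precisely the condition in the definition of localization. Naturality in $Z$ is clear, so $\lambda$ is a localization of $X$ at $S$; since $\lambda$ is a cobase change of the normal monomorphism $\bigsqcup_S C_1\hookrightarrow \bigsqcup_S J$ along a map out of the normal object $X$, the target $\cL(X,S)$ is again normal, so the definition applies on the nose without passing to a normalization.

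The step I expect to require the most care is justifying that the pullback square of simplicial sets obtained from \eqref{local} is \emph{homotopy} meaningful — i.e. that restricting along $C_1\hookrightarrow J$ gives an actual trivial Kan fibration (not merely a weak equivalence) so that the base change argument produces a genuine trivial fibration, and that ``full subobject cut out by an equivalence condition'' is literally the simplicial set $\hom_S(X,Z)$ appearing in the definition. Here one must remember that $\hom(-,-)$ is the underlying simplicial set of the inner-hom and that the tensor product of dendroidal sets is only ``almost'' associative; however, only the variable $C_1$ versus $J$ is being changed and these are simplicial sets, so $\Omega[T]\otimes(-)$ applied to $C_1\hookrightarrow J$ is a normal monomorphism of dendroidal sets that is a weak equivalence when tested against $\infty$-operads, and the required trivial-fibration statement follows from the pushout-product axiom for the operadic model structure together with \cite[Proposition 9.28]{HeMo:SDHT}. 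The rest is the diagram chase above.
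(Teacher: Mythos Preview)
Your argument is correct and takes a genuinely different route from the paper's. You apply $\hom(-,Z)$ to the pushout \eqref{local} to obtain a strict pullback square and then use that $\hom(J,Z)\to\hom(C_1,Z)$ is an isofibration which factors through a trivial fibration onto the full subobject $\hom(C_1,Z)^{\mathrm{eq}}$; pulling back along the evaluation map $\hom(X,Z)\to\prod_s\hom(C_1,Z)$ then gives both the essential image description and full faithfulness of $\lambda^*$ in one stroke. The paper instead asserts the essential image statement without further comment and proves full faithfulness separately, by reformulating it as the requirement that
\[
\begin{tikzcd}
X\otimes\partial C_1 \arrow[r]\arrow[d] & X\otimes C_1 \arrow[d]\\
\cL(X,S)\otimes\partial C_1 \arrow[r] & \cL(X,S)\otimes C_1
\end{tikzcd}
\]
be a homotopy pushout in the operadic model structure, and then verifies this via a cube argument after reducing to $|S|=1$. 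Your approach is more direct and packages the ``$C_1\to J$ is already a localization'' input cleanly; the paper's approach trades that input for the pushout--product behaviour of $\otimes$, which is perhaps more native to the dendroidal setting. The one point you rightly flag as delicate---that $\hom(J,Z)\to\hom(C_1,Z)^{\mathrm{eq}}$ is an honest trivial fibration and not merely a categorical equivalence---is indeed the crux; it follows, for instance, from the marked-anodyne status of $(\Delta^1)^\sharp\hookrightarrow J^\sharp$, and in any case your argument goes through verbatim with the weaker input ``isofibration and fully faithful in the Joyal sense'', since pullback along an isofibration preserves both properties. A minor slip: where you write ``the left vertical map'' you mean the bottom horizontal map.
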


\begin{proof}	Observe that, by left properness of the operadic model structure, the pushout in \Cref{local} is a homotopy pushout. Let $Z$ be a dendroidal $\infty$-operad and consider the map $$\lambda^*\colon \hom(\cL(X,S),Z)\longrightarrow \hom(X,Z).$$ \noindent The essential image of $\lambda^*$ consists of all functors sending $S$ to equivalences, so we just need to show $\lambda$ is also fully faithful. 
%Since for any $\infty$-operad $Z$, normal dendroidal set $X$ and simplicial set $M$ there is a natural isomorphism of Kan complexes $\mathsf{Map}(M,\hom(X,Z)) \simeq \hom(X\otimes M, Z),$ 
To this end, it is sufficient to show that the diagram
\begin{center}
\begin{tikzcd}
{\hom({\cL(X,S)\otimes C_1}, Z)} \arrow[d] \arrow[r] & {\hom({X\otimes C_1},Z)} \arrow[d] \\
{\hom({\cL(X,S)\otimes \partial C_1},Z)} \arrow[r]  & {\hom({X\otimes \partial C_1},Z)} 
\end{tikzcd}
\end{center}
is a homotopy pullback for the Joyal model structure, which happens if the diagram

\begin{equation}\label{hpushout}
\begin{tikzcd}
{X\otimes \partial C_1} \arrow[r] \arrow[d] & {X\otimes C_1} \arrow[d] \\
{\cL(X,S)\otimes \partial C_1} \arrow[r]      & {\cL(X,S)\otimes C_1}     
\end{tikzcd}
\end{equation} is a homotopy pushout for the operadic model structure. Since the diagram in \eqref{local} is a transfinite composition of homotopy pushouts along the morphisms $\{C_1\xrightarrow{s} J\}_{s\in S}$, it is sufficient to show that it is an homotopy pushout just in the case when $S=\{s\}$, where it appears as the front face of the commutative cube

\begin{center}
\begin{tikzcd}
{C_1\otimes \partial C_1} \arrow[rr] \arrow[dd] \arrow[rd, "s\otimes \id"] &                                                      & {C_1\otimes C_1} \arrow[dd] \arrow[rd, "s\otimes \id"] &                                   \\
                                                                          & {X\otimes \partial C_1} \arrow[rr] \arrow[dd] &                                                        & {X\otimes C_1} \arrow[dd] \\
{J\otimes \partial C_1} \arrow[rd] \arrow[rr]                   &                                                      & {J\otimes C_1} \arrow[rd]                   &                                   \\
                                                                          & {\cL(X,S)\otimes\partial C_1} \arrow[rr]    &                                                        & {\cL(X,S) \otimes C_1}   
\end{tikzcd}
\end{center}

\noindent The front face of the cube is a homotopy pushout because all the others are, and this concludes the proof.

\end{proof}

\noindent The above construction essentially means that to localize a dendroidal set at a set of $1$-morphisms, we can first localize its underlying simplicial set and then glue it to the original dendroidal set $X$, as we explain in the next

\begin{rmk}
Let $X$ be a normal dendroidal set, and write $M\coloneqq i^*X$ for its underlying simplicial set. Of course, we have $S\subseteq X_{C_1}=M_1$, so we can localize $M$ at $S$. Let ${\overline{\lambda}\colon M \to \cL(M,S)}$ be the localization map. The localization of $X$ at $S$ can be realized as the following homotopy pushout
\begin{center}
\begin{tikzcd}
M=i^*X \arrow[d, "\overline{\lambda}"] \arrow[r, "\epsilon_X"] & X \arrow[d, "\lambda"] \\
{ \cL(M,S)} \arrow[r]                              & {\cL(X,S)}           
\end{tikzcd}
\end{center}
where the top horizontal map is the counit of the adjunction $(i_!,i^*)$.

\end{rmk}

\noindent Given two endofunctors of $X$ which preserve $S$, there is an easy way to check when they are homotopy equivalent as endofunctors of the localization $X[S^{-1}]$. Recall the following

\begin{deff} Let $f,g\colon X \to Y$ be two maps of dendroidal sets.
	An \emph{homotopy} between $f$ and $g$ is a morphism $ h\colon X\otimes C_1\to Y$ in $\dsets$ with the property that $h_0=f$ and $h_1=g$, where $h_i$ is the restriction of $h$ along the leaf, resp. root, inclusions $\eta\xrightarrow{\{i\}}C_1$, $i=0$, resp. $i=1$. 
\end{deff}

\noindent For $x\in X_\eta$, the arrow $h_x \colon f(x)\to g(x)$ in $Y_{C_1}$ is called a \emph{component} of $h$. 
\begin{lemma}\label{lemmaequiv}
Let $X$ be normal, $S\subseteq X_{C_1}$ a set of $1$-morphisms. Let $f,g\colon X \to X$ be two maps sending $S$ to itself. If $f$ and $g$ are homotopic via a homotopy whose components are in S, then $f$ and $g$ are homotopy equivalent as maps $f,g\colon X[S^{-1}]\to X[S^{-1}]$. \end{lemma}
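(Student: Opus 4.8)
The plan is to reduce, via the universal property of the localization, to comparing the images of $f$ and $g$ inside the $\infty$-category of functors $X\to X[S^{-1}]$, and then to read the comparison off from the homotopy $h$.

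Since the statement only depends on $X[S^{-1}]$ up to operadic weak equivalence, I would first replace it by a normal dendroidal $\infty$-operad $Y$, so that by \cite[Proposition 9.28]{HeMo:SDHT} all the mapping objects $\hom(-,Y)$ below are $\infty$-categories; write $\lambda\colon X\to Y$ for the localization map. Because $f$ and $g$ send $S$ into itself and $\lambda$ sends $S$ to equivalences, the composites $\lambda f,\lambda g\colon X\to Y$ send $S$ to equivalences, hence by the universal property of $\lambda$ they factor, essentially uniquely, as $\lambda f\simeq\bar f\lambda$ and $\lambda g\simeq\bar g\lambda$ for maps $\bar f,\bar g\colon Y\to Y$. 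These are the induced self-maps of $X[S^{-1}]$ appearing in the statement, and the goal becomes to prove $\bar f\simeq\bar g$ in $\hom(Y,Y)$.

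By definition of localization, $\lambda^*\colon\hom(Y,Y)\to\hom(X,Y)$ is fully faithful with essential image the full subcategory $\hom_S(X,Y)$ of $S$-inverting functors, which contains $\lambda f$ and $\lambda g$; since a fully faithful functor reflects equivalences, it suffices to prove that $\lambda f$ and $\lambda g$ are equivalent objects of $\hom(X,Y)$. Now, as $C_1\cong\Delta^1$, a map $X\otimes C_1\to Y$ is exactly a $1$-simplex of $\hom(X,Y)$, so $\lambda h\colon X\otimes C_1\to Y$ is a morphism from $\lambda f$ to $\lambda g$ in $\hom(X,Y)$, and restricting it along an object $x\colon\eta\to X$ exhibits its component at $x$ as $\lambda(h_x)$, which is an equivalence in $i^*Y$ since $h_x\in S$ and $\lambda$ inverts $S$.

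It then remains to conclude that $\lambda h$ is itself an equivalence in $\hom(X,Y)$, and I expect this to be the only real point of the argument. It is the dendroidal analogue of the classical fact that a natural transformation valued in an $\infty$-category is an equivalence as soon as it is a pointwise equivalence: a morphism of $\hom(X,Y)$, with $X$ normal and $Y$ a dendroidal $\infty$-operad, is an equivalence precisely when its component at every object of $X$ is an equivalence in $i^*Y$. The nontrivial direction is proved by the usual special-outer-horn filling argument — extending $\lambda h$ along $X\otimes C_1\hookrightarrow X\otimes J$ cell by cell over the skeletal filtration of $X$, the outer-horn extension problems being solvable exactly because the images of the $\eta$-cells have already been arranged to be equivalences — and I would either extract it from \cite{HeMo:SDHT} or prove it in passing. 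Granting this input, $\lambda h$ is an equivalence, so $\lambda f\simeq\lambda g$ in $\hom(X,Y)$, and by the previous paragraph $\bar f\simeq\bar g$ in $\hom(Y,Y)$, which is the assertion.
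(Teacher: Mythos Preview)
Your argument is correct and shares the same core ingredient as the paper's proof, namely the pointwise criterion: a $1$-morphism in $\hom(X,Z)$, with $X$ normal and $Z$ a dendroidal $\infty$-operad, is an equivalence as soon as each component at an object of $X$ is an equivalence in $i^*Z$. The packaging, however, is different. You fix a fibrant normal model $Y$ for $X[S^{-1}]$, push the homotopy $h$ forward along $\lambda$ to obtain a single arrow $\lambda h$ in $\hom(X,Y)$, check it is a pointwise equivalence, and then use full faithfulness of $\lambda^*\colon\hom(Y,Y)\to\hom(X,Y)$ to lift the resulting equivalence $\lambda f\simeq\lambda g$ to $\bar f\simeq\bar g$. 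The paper instead works dually: for an arbitrary $\infty$-operad $Z$ it transposes $h$ to get a natural transformation $\mathfrak h\colon \hom_S(X,Z)\times\Delta^1\to\hom_S(X,Z)$ between $f^*$ and $g^*$, and shows that each component $\mathfrak h_\phi$ is an equivalence in $\hom(X,Z)$ by the same pointwise criterion, since $(\mathfrak h_\phi)_x=\phi(h_x)$ and $h_x\in S$. Your route is slightly more direct and avoids the extra layer of quantifying over all $Z$; the paper's route avoids choosing a specific fibrant model for the localization and stays closer to the definition $\hom(X[S^{-1}],Z)\simeq\hom_S(X,Z)$. Either way the substance is the same.
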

\begin{proof} Let $h\colon X \otimes C_1\to X$ be the homotopy between $f$ and $g$ whose components lie in $S$. The transpose of $h^*\colon \hom(X,Z)\to \hom(X\otimes C_1,Z)\simeq \hom(C_1,\hom(X,Z))$ induces a morphism of simplicial sets $ \hom_S(X,Z)\times \Delta^1\to\hom(X,Z)$. By the hypotheses on $f$ and $g$ and fullness of  $\hom_S(X,Z)$ in $\hom(X,Z)$, there is an induced homotopy $\mathfrak{h}\colon \hom_S(X,Z) \times \Delta^1 \to \hom_S(X,Z)$. To prove that $\mathfrak{h}$ is a natural equivalence, it suffices to see that, for any object $\phi $ of $\hom_S(X,Z)$, the arrow $\mathfrak{h}_\phi\colon f^*(\phi)\to g^*(\phi)$ is an equivalence in $\hom_S(X,Z)$, that is in $\hom(X,Z)$. So we only need to check that for any $x\in X_\eta$, the $1$-morphism $(\mathfrak{h}_\phi)_x\colon \phi(f(x))\to \phi(g(x))$ is an equivalence in $Z$. As $(\mathfrak{h}_\phi)_x=\phi(h_x)$, and since by hypothesis $\phi(h_x)$ is a weak equivalence as $h_x$ is an arrow in $S$, we have that $\mathfrak{h}$ establishes an equivalence between $f^*$ and $g^*$. This concludes the proof.
\end{proof}

\noindent The homotopy theory of algebras over a dendroidal $\infty$-operad $X$ is governed by the covariant model structure on the over-category $\dsets/X$, so it  is a natural question to investigate compatibility of the localization of dendroidal $\infty$-operads with the covariant model structure for dendroidal left fibrations. We study this in \Cref{section4} (\Cref{localg}).
\section{The delocalization theorem}\label{sec:rootfunctor}

We will now functorially associate to every dendroidal set $X$ a discrete operad $(\omg/X)^\otimes$. To mimic Joyal's theorem for simplicial sets, its objects should be the representable elements of the presheaf $X$, so we will call the operad $(\omg/X)^\otimes$ the \emph{operad of elements} of $X$.

\subsection{The operad of elements and the root functor}\label{thedef} 

\noindent For every dendroidal set $X$, disjoint union endows the overcategory $\dsets_{/X}$ with a symmetric monoidal structure, with associated operad $(\dsets_{/X})^\sqcup$. A first way to define the operad of elements of $X$ would be to do it as the full suboperad $(\omg/X)^\sqcup$ of $(\dsets_{/X})^\sqcup$ spanned by the representable objects. As we will explain in Remark \ref{rmk:nogood}, this operad can't work. What we do now is constructing what we will prove to be the \emph{maximal} wide suboperad of $(\omg/X)^\sqcup$ which works. To give its defininition (Definition \ref{operadofdendrices}), we need to introduce the following notions.

\begin{deff}
	Given trees $T_i, S$, a morphism $f\colon \bigsqcup_{i=1}^n T_i \longrightarrow S $, $f=\bigsqcup f_i$, is called \emph{independent} if the edges $f(r{_{T_i}})$ and $f(r_{T_{i'}})$ are incomparable in the poset $E(S)$ whenever $i\neq i'$.
	  
\end{deff}

\noindent Observe that this implies that for all edges $e\in E(T_i)$, $e'\in E(T_{i'})$, the edges $f_i(e)$ and $f_{i'}(e')$ are incomparable in $E(S)$. Also, the independency condition is trivially satisfied when $n=1$.

\begin{deff}[{\cite{HeMo:SDHT}}]
	A $f\colon \bigsqcup_{i=1}^n T_i \longrightarrow S $ is called \emph{wide} if any maximal monotonic path in the poset $E(S)$, that is from a minimal element to the root $r_S$, contains one element of the form $f(r_{T_i})$ for some $i$.
\end{deff} 

\noindent Observe that if $f$ is also independent, if there exists such a constituent $T$, then it is unique.

\noindent For independent maps, we can reformulate the wideness condition in the following useful way.
\begin{lemma}\label{wide}
Let $f\colon \bigsqcup_{i=1}^n T_i \longrightarrow S  $ be an independent map. Then $f$ is wide if and only if ${S(f(r_{T_1}), \dots, f(r_{T_n}); r_S)\neq \emptyset}$.
\end{lemma}
\begin{proof} It suffices to observe that, given edges $e_1,\dots,e_n$ of $S$, the only obstruction to the existence of a subtree with leaves $e_1,\dots,e_n$ and root $r_S$ is the existence of a maximal monotonic path $\mathfrak{p}=\{l<a_1<\dots<a_m<r_S\}$ in the poset of edges of $S$ such that $\{e_1,\dots,e_n\}\cap \mathfrak{p}=\emptyset$. 
\end{proof}

\noindent Let us single out an important class of maps.
\begin{deff}\label{dec2}
A morphism of trees $f\colon T \to S$ is \emph{root preserving} if $f(r_T)=r_S$. 
\end{deff}

\noindent Any root preserving map is wide. The converse is true if the root vertex of $S$ is not unary or when $S$ does not have nullary vertices. In particular, observe that all maps of linear trees $[n]\to [m]$ are wide and independent. In fact, wide independent maps are generated, under composition and coproduct, by the root preserving morphisms and the forest root faces, which we introduce with the next

\begin{example}	Grafting of unordered uples of trees defines a symmetric functor $$ (\omg^{\times n})_{\Sigma_n}\longrightarrow \omg \, \quad (T_1,\dots,T_n)\mapsto C_n\circ (T_1,\dots,T_n), $$ where $C_n\circ (T_1,\dots,T_n)$ is the tree obtained by grafting each $T_i$ on one leaf of the $n$-corolla $C_n$, $$\overline{F}=C_n\circ (T_1,\dots,T_n) .$$ The inclusions induce a map $$ T_1\sqcup\dots \sqcup T_n \longrightarrow C_n\circ (T_1,\dots,T_n),$$ which is both wide and independent. We refer to this map as a \emph{forest root face} (this terminology appears in \cite[\S 6.6]{HeMo:SDHT}).
	
	\begin{center}
		\begin{tikzcd}[sep=small]
			&                                                                   &        &                             & {} \arrow[rd, no head] &                                                 & {}                                             &                    &    &                        &                                                                     &    & {}                                              &                                                                     & {}                    &                        \\
			{} \arrow[rd, no head] & {}                                                                & {}     & \bullet \arrow[dd, no head] &                        & \bullet \arrow[ru, no head] \arrow[rd, no head] & {} \arrow[d, no head]                          & {}                 &    & {} \arrow[rd, no head] & {}                                                                  & {} & \bullet \arrow[dd, no head]                     & \bullet \arrow[rd, no head] \arrow[lu, no head] \arrow[ru, no head] & {} \arrow[d, no head] & {} \arrow[ld, no head] \\
			& \bullet \arrow[u, no head] \arrow[ru, no head] \arrow[d, no head] & \oplus &                             & \oplus                 &                                                 & \bullet \arrow[ru, no head] \arrow[d, no head] & {} \arrow[r, hook] & {} &                        & \bullet \arrow[u, no head] \arrow[ru, no head] \arrow[rrd, no head] &    &                                                 &                                                                     & \bullet               &                        \\
			& {}                                                                &        & {}                          &                        &                                                 & {}                                             &                    &    &                        &                                                                     &    & \bullet \arrow[rru, no head] \arrow[d, no head] &                                                                     &                       &                        \\
			&                                                                   &        &                             &                        &                                                 &                                                &                    &    &                        &                                                                     &    & {}                                              &                                                                     &                       &                       
		\end{tikzcd}
		
		\vspace*{-.3cm}
		
		\captionof{figure}{A forest root face.}

	\end{center}
\end{example}

\noindent We can now define the operad of elements.

\begin{deff}\label{operadofdendrices}
	Let $X$ be a dendroidal set. Its \emph{operad of elements} $(\omg/X)^\otimes$ is the sub operad of $(\dsets_{/X})^\sqcup$ specified by the following data:
	\begin{itemize}
		\item the set of objects of $(\omg/X)^\otimes$ is the set of elements of $X$ as a presheaf, that is, $$ \mathsf{Ob}(\omg/X)^\otimes=\{ (T,\alpha) \ | \ T \in \omg \text{ and } \alpha\colon T \to X\};$$
		\item for objects  $(S_1,\alpha_1),\dots,(S_n,\alpha_n)$, $(R,\beta)$, the set of operations $$(\omg/X)^\otimes((S_1,\alpha_1),\dots,(S_n,\alpha_n);(R,\beta))$$ is given by the wide and independent maps $f=(f_i)_i\colon S_1\sqcup \dots \sqcup S_n \to R$ in $\dsets_{/X}$.

	\end{itemize}
\end{deff}

\begin{rmk}\label{rmk:sfree}We list some properties of the operad of elements.
	\begin{enumerate}
		\item For a general $X$, the underlying category of $(\omg/X)^\otimes$ is a wide but not necessarily full subcategory of the category of elements of $X$, which is related to the presence of units (cfr the discussion after Definition \ref{dec2}).
		\item In particular, if $X$ is a simplicial set than $(\omg/X)^\otimes $ \emph{is} the category of elements $\Delta/X$. Its objects are the pairs $([n],f)$, with $n\geq 0$ and $f\colon \Delta^n \to X$; a morphism $F\colon ([n],f)\to ([m],g)$ is just a map of representable simplicial sets $F\colon [n]\to [m]$ over $X$. The choice of embedding $\Delta\to \omg$ chosen in Section \ref{section1} identifies the root of a simplex $[n]$ with $n$ , hence root preserving morphisms of linear trees are just the $F\colon [n]\to[m]$ preserving the last vertex, that is$F(n)=m$.
	\item By the independency condition on operations, the operad $(\omg/X)^\otimes$ is $\Sigma$-free.
	\end{enumerate}
\end{rmk}

\noindent The construction $X\mapsto (\omg/X^\otimes$ is functorial in $X$ via postcomposition, and we get an endofunctor \[\cN_d(\omg/-)^\otimes\colon \dsets \to \dsets.\] Let us postpone to later the proof of the next central

\begin{lemma}\label{lke}The functor $\cN_d(\omg/-)^\otimes$ is cocontinuous. Moreover, it preserves normal monomorphisms of dendroidal sets.
\end{lemma}

\noindent In other words, it means that $\cN_d(\omg/-)^\otimes$ is equivalent to the left Kan extension of its restriction to representables, and moreover in an homotopy-coherent way

\noindent Now, let $T$ be a tree and $(S,\alpha)$ an object of $(\omg/T)^\otimes$. Evaluation of $\alpha$ at the root of $S$ yields an assignment \begin{equation}\label{evaluation}
	\mathsf{Ob}(\omg/T)^\otimes\ni (S,\alpha)\mapsto \alpha(r_S) \in \mathsf{Ob}(T)=E(T).
\end{equation}

\begin{prop}\label{prop:extension}
	The assignment in \Cref{evaluation} extends to a map of operads $$ \scriptr_T\colon (\omg/T)^\otimes \longrightarrow T$$ that we call the \emph{root functor} for $T$.
\end{prop}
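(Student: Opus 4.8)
The plan is to upgrade the edge-map assignment $(S,\alpha)\mapsto \alpha(r_S)$ to a full morphism of operads, which amounts to specifying, for every operation $f$ of $\omg/T$, an operation of $T$ lying over the appropriate tuple of roots, and then checking compatibility with partial composition and the symmetric group action. Recall that an operation $$f\in(\omg/T)\bigl((S_1,\alpha_1),\dots,(S_n,\alpha_n);(R,\beta)\bigr)$$ is by definition a wide independent map of forests $f\colon S_1\oplus\dots\oplus S_n\to R$ with $\beta\circ f=(\alpha_1,\dots,\alpha_n)$. On objects $\scriptr_T$ sends $(S_i,\alpha_i)$ to $\alpha_i(r_{S_i})=\beta(f(r_{S_i}))$ and $(R,\beta)$ to $\beta(r_R)$. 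So I need an operation of $T$ from the tuple $\bigl(\beta(f(r_{S_1})),\dots,\beta(f(r_{S_n}))\bigr)$ to $\beta(r_R)$. By \Cref{wide}, since $f$ is wide and independent, $R\bigl(f(r_{S_1}),\dots,f(r_{S_n});r_R\bigr)\neq\emptyset$; because $R=\Omega(R)$ has at most one operation between any fixed tuple of edges, this set is a singleton, picking out a unique subtree of $R$ with leaves $\{f(r_{S_1}),\dots,f(r_{S_n})\}$ and root $r_R$. Applying $\beta\colon R\to T$ (as a map of operads) to this operation produces the desired element of $T\bigl(\beta(f(r_{S_1})),\dots,\beta(f(r_{S_n}));\beta(r_R)\bigr)$, and we set $\scriptr_T(f)$ to be it.

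I would next verify the two structural compatibilities. For symmetric group equivariance: permuting the constituents $S_1,\dots,S_n$ of the source forest permutes the leaves $f(r_{S_i})$ of the chosen subtree accordingly, and $\beta$ is a map of operads, so $\scriptr_T(\sigma^* f)=\sigma^*\scriptr_T(f)$; this is essentially formal from uniqueness of the subtree. For compatibility with partial composition, take $g\in(\omg/T)\bigl((Q_1,\gamma_1),\dots,(Q_m,\gamma_m);(S_i,\alpha_i)\bigr)$; by the explicit formula in the excerpt, $f\circ_{(S_i,\alpha_i)}g$ is the wide map $(\id,\dots,g,\dots,\id)$ followed by $f$. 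The point is that grafting the unique $g$-subtree of $S_i$ (with root $r_{S_i}$) into the leaf $f(r_{S_i})$ of the unique $f$-subtree of $R$ yields precisely the unique subtree associated to $f\circ g$ — this is the statement that wide maps compose "by grafting of subtrees", which the excerpt already notes (the remark after \Cref{wide}). Applying $\beta$ and using that $\beta$ intertwines grafting with operadic composition (it is a morphism of operads, and operadic composition in a tree operad is grafting) gives $\scriptr_T(f\circ_{(S_i,\alpha_i)}g)=\scriptr_T(f)\circ\scriptr_T(g)$. Units are handled by the $n=1$ identity case, where the unique subtree is all of $R$ and the formula collapses to $\beta(r_R)$.

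I also need to check the assignment is well-defined on objects in the sense required by the operad structure, i.e. that the target object of $\scriptr_T(f)$ is $\scriptr_T(R,\beta)=\beta(r_R)$ and the source objects are $\scriptr_T(S_i,\alpha_i)=\alpha_i(r_{S_i})$; this is immediate since $\beta f=(\alpha_1,\dots,\alpha_n)$ forces $\beta(f(r_{S_i}))=\alpha_i(r_{S_i})$, and $\beta$ applied to the root of the chosen subtree is $\beta(r_R)$ by construction. The main obstacle, and the step deserving the most care, is the partial-composition identity: one must pin down exactly why the unique subtree attached to the composite wide map $f\circ g$ is the grafting of the subtrees attached to $f$ and to $g$, rather than merely agreeing on roots and leaves. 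This reduces to the uniqueness clause in \Cref{wide} together with the observation that grafting a subtree-with-root-$\ell$ into the leaf $\ell$ of another subtree produces a subtree whose leaves are the leaves of the outer subtree with $\ell$ replaced by the leaves of the inner one — exactly the leaf set of $f\circ g$ — and whose root is unchanged; uniqueness then forces the match. Everything else is a direct unwinding of definitions, so I would state it compactly.
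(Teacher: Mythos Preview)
Your proof is correct, but it works harder than necessary compared to the paper's argument. The paper exploits a structural feature of the target operad that you overlook: for a tree $T$, every operation set $T(e_1,\dots,e_n;e)$ is either empty or a singleton. Consequently, once you have specified $\scriptr_T$ on objects, there is \emph{no choice} to make on operations and \emph{nothing} to verify for composition, symmetries, or units---any two parallel operations in $T$ are equal, so all the structural identities hold automatically. The only obligation is to check that whenever $(\omg/T)\bigl((S_1,\alpha_1),\dots,(S_n,\alpha_n);(R,\beta)\bigr)$ is non-empty, so is $T\bigl(\alpha_1(r_{S_1}),\dots,\alpha_n(r_{S_n});\beta(r_R)\bigr)$. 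The paper dispatches this in one line: \Cref{wide} gives $R(f(r_{S_1}),\dots,f(r_{S_n});r_R)\neq\emptyset$, and applying the operad map $\beta$ (with $\beta f=(\alpha_1,\dots,\alpha_n)$) pushes this into $T$.

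Your argument, by contrast, explicitly constructs the image operation and then verifies equivariance and the partial-composition law by analyzing grafting of subtrees. This is not wrong---and it is essentially what one would have to do if the target were a general operad---but here it is redundant: you even note that $R$ has singleton operation sets, yet do not apply the same observation to $T$ to short-circuit the remaining checks. The paper's route is shorter and more conceptual for this particular statement; yours would generalize better if the codomain were not a tree.
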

\begin{proof}
The set of operations of a tree $T$ is either empty or a singleton. It follows that to prove the statement we just need to check that, if there exists a wide independent map $f\colon (S_1,\alpha_1), \dots, (S_n,\alpha_n)\to (R,\beta)$, then ${T(\alpha_1(r_{S_1}),\dots, \alpha_n(r_{S_n}); \beta(r_R))\neq \emptyset}$. By Lemma \ref{wide}, $R(f(r_{S_1}),\dots,f(r_{S_n}); r_R)\neq \emptyset$, and since $\beta$ is a map of operads  and $\beta \circ f =(\alpha_1,\dots,\alpha_n)$ we have the thesis, as wanted.
\end{proof}

\begin{rmk}\label{rmk:nogood}
	The assignment in \Cref{evaluation} makes sense even for the \emph{full} suboperad $(\omg/X)^\sqcup\subseteq (\dsets/X)^\sqcup$. However, Proposition \ref{prop:extension} does \emph{not} hold for $(\omg/X)^\sqcup$. An easy way to see this is taking $X=C_n$ any $n$-corolla, and the binary operation $(\id,\id)\colon (C_n,\id), (C_n,\id) \to (C_n,\id)$. A candidate image for this latter has to be a subtree of $C_n$ with leaves $(r_{C_n},r_{C_n})$ and root $r_{C_n}$, but such subtree does not exist. Proposition \ref{prop:extension} characterizes $(\omg/X)^\otimes$ as the maximal sub operad of $(\omg/X)^\sqcup$ such that the statement holds: maximality consists in observing that the condition shown in the proof is an if and only if.
\end{rmk}

\noindent Because of cocontinuity of $\cN_d(\omg/-)^\otimes$, we can extend the root functor to every dendroidal set.

\begin{deff}\label{rootfnc}
Let $X$ be a dendroidal set. The \emph{root functor} of $X$ is the morphism of dendroidal sets $$\scriptr_X \colon \cN_d(\omg/X)^\otimes\longrightarrow X$$ defined as the colimit $$\scriptr_X \coloneqq \underset{T\to X}{\colim} \scriptr_T\colon \cN_d(\omg/T)^\otimes\longrightarrow T.$$

\end{deff}

\noindent We can interpret an object $(S,\alpha)$ in $\cN_d(\omg/X)^\otimes$ as a decoration of the tree $S$ with objects and operations in $X$, together with all higher coherences governing operadic composition (which are not visible in the picture). In particular, by contracting all inner edges of $S$ one obtains an operation of $X$. The root functor then sends $(S,\alpha)$ to the target of this operation.  Visually

\vspace{-0.4cm}
\begin{figure}[h]
	\centering
	\adjustbox{scale=0.8,center}{
		\begin{tikzcd}
			& \quad                                                                             &                       & \quad &             \\
			\alpha(\ell_1) \arrow[rd, no head] & \alpha(\ell_2)                                                                    & \alpha(\ell_3)        &       &             \\
			& \ \ \bullet \ \alpha(p) \arrow[ru, no head] \arrow[u, no head] \arrow[d, no head] & {} \arrow[r, maps to, "\scriptr_X"] & {}    & \alpha(r_S) \\
			& \alpha(r_S)                                                                       &                       &       &            
			\end{tikzcd}}
		\captionof{figure}{The root functor for $S\simeq C_3$ and a map $\alpha\colon C_3\to X$.}
\end{figure}

\noindent Observe that, for any tree $T$ and morphism $\alpha\colon T \to X$, the following diagram commutes:
\begin{center}
	\begin{tikzcd}
		(\omg/T)^\otimes \arrow[r, "(\omg/\alpha)^\otimes"] \arrow[d, "\scriptr_T"'] & (\omg/X )^\otimes \arrow[d, "\scriptr_X"] \\
		T \arrow[r, "\alpha"] & X
	\end{tikzcd}
\end{center}

\begin{rmk}
	If $X$ is a simplicial set, the root functor coincides with the \emph{last vertex functor} $$\scriptr_X\colon \Delta/X \to X, \quad \scriptr_X([n],f)= f(n).$$
\end{rmk}

\noindent We conclude this section with the promised proof of cocontinuity of the nerve operad of elements construction.

\begin{proof}[{Proof (of Lemma \ref{lke})}]
	We denote by $\theta$ the restriction of $\cN_d(\omg/-)^\otimes$ to the representables, that is $\theta=\cN_d(\omg/-)^\otimes_{|_\omg}$, and we write $\widehat{\theta}$ for its left Kan extension along the Yoneda embedding. 
	
	\noindent Given a dendroidal set $X$ and a tree $T$, the set $\widehat{\theta}(X)_T$ may be described as $$ \widehat{\theta}(X)_T= \{ ((Q,x), (T,u)) \ | \ Q\in \omg, x\colon Q \to X, u\colon T \to \theta(Q)\}/\sim,$$ where, for any $\alpha\colon S \to Q$, one identifies $$ ((Q,x), (R,\alpha_* u))\sim ((S, x\alpha), (R,u)),$$ where $\alpha_*=\cN_d(\omg/\alpha)^\otimes$.
	Functoriality in $T$ is given by letting faces and degeneracies act on the second component, while functoriality in $X$ is obtained via the first component. 
	
	\noindent We construct a natural equivalence
	$$\psi_X\colon \widehat{\theta}(X)\to \cN_d(\omg/X)^\otimes$$ by defining its components $(\psi_X)_T$ by induction on the number of vertices of the tree $T$. 
	When $T\simeq \eta$, we set $$ \psi_X((Q,x), (\eta, u))\coloneqq x_*\circ u \colon \eta \to \cN_d(\omg/X)^\otimes.$$ The assignment respects the equivalence relation, as (with the same notations as above) one has $$ \psi_X((Q,x),(\eta,\alpha_* u))= x_*(\alpha_* u)= (x\alpha)_* u = \psi_X((S, x\alpha), (\eta,u)),$$ and it is straightforward to see that it induces a bijection. 
	
	\noindent Similarly, given a $n$-corolla $C_n$, $n\geq 0$, and an element $((S, x), (C_n,u)),$ we set $$ \psi_X((S, x), (C_n,u))\coloneqq x_*u \colon C_n \to \cN_d(\omg/X)^\otimes,$$ and it is the same calculation which shows that it descends to the quotient and is a bijection. 
	
	\noindent For the inductive step, let us consider a tree $T$ with at least two vertices and decompose it as the grafting $T=R\cup_a S$, with $R,S\neq \eta$. For any dendroidal set $Y$, there is a natural isomorphism $$ \cN_d(\omg/Y)^\otimes_{T}\simeq \cN_d(\omg/Y)^\otimes_R\underset{\cN_d(\omg/Y)^\otimes_\eta}{\times} \cN_d(\omg/Y)^\otimes_S.$$ The isomorphism is compatible with the equivalence relation defining $\widehat{\theta}(X)$, which means that $\widehat{\theta}(X)$ satisfies the same strict Segal condition, that is, there is a natural isomorphism $$ \widehat{\theta}(X)_T \simeq \widehat{\theta}(X)_R \underset{\widehat{\theta}(X)_\eta}{\times}\widehat{\theta}(X)_S.$$ One defines the map $(\psi_X)_T$ as $$(\psi_X)_T \coloneqq (\psi_X)_R\underset{(\psi_X)_\eta}{\times} (\psi_X)_S.$$It respects the equivalence relation and is a bijection, so we only need to check that $\psi_X$ is well defined. This follows from the fact that any tree $T$ decomposes as the grafting of corollas, and the decomposition is unique up to isomorphism and operadic associativity relations, and the Segal isomorphism is compatible with these latter, which shows that the definition of $(\psi_X)_T$ does not depend on the decomposition of $T$, as wanted.
\end{proof}

\subsection{The delocalization result}\label{section3}\label{rootpresequiv}
In Definition \ref{dec2} we introduced root preserving morphisms of trees, which are in particular wide and independent maps, hence morphisms in the underlying category of $(\omg/X)^\otimes$. Denote by $\cR_X$ the set of root preserving morphisms of $(\omg/X)^\otimes$.

\noindent If $f\colon (S,\alpha)\to (R,\beta)$ is root preserving, then $\scriptr(f)= \id_{f(r_S)}$, so the root functor factors via the localization of $\cN_d(\omg/X)^\otimes$ at $\cR_X$, as 

\[
\begin{tikzcd}
	\cN_d(\omg/X)^\otimes \arrow[rr, "\scriptr_X"] \arrow[rd, "\lambda"']& & X \\
	& \cN_d(\omg/X)^\otimes[\cR_{X}^{-1}] \arrow[ru, "\overline{\scriptr_X}"'] \ .& 
\end{tikzcd}
\]

\begin{theorem}\label{main}
 For any normal dendroidal set $X$, the root functor induces an operadic weak equivalence of dendroidal sets $$\overline{\scriptr_X} \colon \cN_d(\omg/X)^\otimes[\cR^{-1}_X]\xrightarrow{\sim} X.$$
 
 \noindent Moreover, if $X$ is a simplicial set, the root functor coincides with the last vertex functor  and yields a weak categorical equivalence $\cN(\Delta/X)[\cR_X^{-1}]\xlongrightarrow{\sim} X$
 \end{theorem}

\begin{proof} We model localization of dendroidal sets via the pushout, as illustrated in Proposition \ref{modeloc}. By Lemma \ref{lke}, localizing the nerve of $(\omg/X)^\otimes$ at $\cR_X$ is a cocontinuous endofunctor of dendroidal sets and preserves normal monomorphisms. We may therefore proceed by skeletal filtration, reducing the proof to the case of $X\simeq T$ a tree, that is a representable dendroidal set. The root functor is the nerve of the map of operads $\scriptr\colon (\omg/T)^\otimes\to T$, and we construct a section  $\mathfrak{l}_T$ of $\scriptr_T$, $$\mathfrak{l}_T\colon T \to (\omg/T)^\otimes$$ and a homotopy $$h\colon  \cN_d(\omg/T)^\otimes\otimes C_1\to \cN_d(\omg/T)^\otimes$$ between $ \mathfrak{l}_T\circ \scriptr_T$ and $\id_{(\omg/T)^\otimes}$, such that its components are root preserving morphisms. After Lemma \ref{lemmaequiv}, this implies that $h$ becomes an equivalence between $ \mathfrak{l}_T\circ \scriptr_T $ and $\id_T$ after localizing at $\cR_T$, which means that $\mathfrak{l}_T$ and $\scriptr_T$ are homotopy inverses of the other once localized.

\noindent To construct $h$, consider an edge $e$ of $T$, and let $T_e^{\uparrow}$ be the biggest subtree of $T$ having $e$ as root. We denote by $\iota_e\colon T^\uparrow_{e}\hookrightarrow T$ the associated subtree inclusion. Observe that, if $e\leq f$, then $T_e$ is a subtree of $T_f$. 

\noindent We define the morphism $\mathfrak{l}_T$ on an edge $e$ of $T$ as $$\mathfrak{l}_T(e)\coloneqq  (T^\uparrow_{e},\iota_e)\in \mathsf{Ob}(\omg/T)^\otimes.$$ 
Given edges $e_1,\dots,e_n,e$ such that $T(e_1,\dots,e_n;e)=\{*\}$, the map $$\mathfrak{l}_T \colon \{*\} \xlongrightarrow{} (\omg/T)^\otimes((T^\uparrow_{e_1},\iota_{e_1}),\dots,(T^\uparrow_{e_n},\iota_{e_n});(T^\uparrow_{e}, \iota_{e}))$$ selects the operation 
defined as the composition $$\bigsqcup_{i=1}^n{T_{e_i}}^{\uparrow}\xrightarrow{} C_n\circ (T_{e_1}^{\uparrow},\dots, T_{e_n}^{\uparrow}) \xrightarrow{\partial} T^\uparrow_e$$ of a forest root face and inner face $\partial$ sending each $T_{e_i}^{\uparrow}$ to itself and the new root vertex to the subtree $T_e^{\underline{e}}$. As the operadic composition for $T$ is the grafting of subtrees, we can check that it is a well defined map of operads $\mathfrak{l}_T\colon T\to (\omg/T)^\otimes$. 
One checks that  $\scriptr_T\circ \mathfrak{l}_T = \id_T$; for the other composition $\mathfrak{l}_T\circ \scriptr_T$, consider an object $(S,\alpha)$ in $(\omg/T)^\otimes$. We have that
  
  $$ \mathfrak{l}_T\circ \scriptr_T(S,\alpha)= \mathfrak{l}_T(\alpha(r_S)))=(T_{\alpha(r_S)}^\uparrow,\iota_{\alpha(r_S)})\ .$$
  
\noindent Since the image of $\alpha$ is contained in the subtree $T_{\alpha(r_S)}^\uparrow$, we can always write $\alpha$ as a composition 
\begin{center}
\begin{tikzcd}
S \arrow[rr, "\alpha"] \arrow[rd, "h_{(S,\alpha)}"'] &                                                                     & T \\
                                                     & T^{\uparrow}_{\alpha(r_S)} \arrow[ru, "\iota_{\alpha(r_S)}"', hook] &  
\end{tikzcd}
\end{center} 

\noindent for an unique root preserving morphism $h_{(S,\alpha)}$. In particular, $h_{(S,\alpha)}$ is a morphism in $(\omg/T)^\otimes$ and we obtain  the collection of morphisms $$h\coloneqq \{h_{(S,\alpha)} \colon (S,\alpha)\to (T_{\alpha(r_S)},\iota_{\alpha(r_S)})\}_{(S,\alpha) \in \mathsf{Ob}(\omg/T)}.$$
Let us show that $h$ defines an homotopy between $\id_{(\omg/T)^\otimes}$ and $\mathfrak{l}_T \circ \scriptr_T$. For this purpose, we need to check the Boardman-Vogt interchange relation, so consider objects  $(S_1,\beta_1),\dots,(S_n,\beta_n),(R,\gamma)$ and an operation $f \in (\omg/T)^\otimes((S_1,\beta_1),\dots,(S_n,\beta_n);(R,\gamma))$. Since $\gamma\circ f=(\beta_1,\dots,\beta_n)$, we have that 
\[ \gamma\circ f = (\iota_{\scriptr_T(\beta_1)}, \dots, \iota_{\scriptr_T(\beta_n)})\circ (h_{(S_1,\beta_1)}, \dots, h_{(S_n,\beta_n)}),\] which is precisely the wanted relation. Consider the functor of dendroidal set given by the dendroidal nerve of $h$; precomposing $\cN_d(h)$ with the natural map $\cN_d(\omg/T)^\otimes\otimes C_1\to \cN_d((\omg/T )^\otimes\otimes C_1)$, we obtain an homotopy between between the identity of $(\omg/T)^\otimes$ and the composition $\mathfrak{l}_T\circ \scriptr_T$. As the components of this homotopy are root preserving morphisms, it becomes an equivalence after localizing at $\cR_T$, and this concludes the argument for a general dendroidal set $X$.

\noindent The specialization to the case of $X$ simplicial sets, that is Joyal's delocalization theorem, follows from Remark \ref[(2)]{rmk:sfree} and the fact that the operadic model structure reduces to Joyal model structure on simplicial sets.
\end{proof}

\noindent The operadic model structure on $\dsets$ admits a left Bousfield localization whose homotopy category is equivalent to that of group-like $\mathbb{E}_\infty$-algebras, in turn equivalent to infinite loop spaces. It is called the \emph{stable model structure}, and was first introduced by Bašić-Nikolaus in \cite{BN:DSMCS}. It has the property that the induced model structure on the overcategory $\ssets\simeq \dsets/\eta$ coincides with the Kan-Quillen model structure. In particular, one localizes also by the arrow $C_1\to J$, which becomes an equivalence, so from \Cref{main} we deduce the following
\begin{coro}\label{westable}
For any normal dendroidal set $X$, the root functor $\scriptr_X \colon \cN(\omg/X)^\otimes\to X$ is a weak equivalence in the stable model structure for $\dsets$.

\noindent In particular, when $X$ is a simplicial set this gives an equivalence of weak homotopy types $\cN(\Delta/X)\xrightarrow{\sim}X$.
\end{coro}

\begin{rmk}\label{rmk:smcat}
When $X$ is a symmetric monoidal $\infty$-category, \cite{A:MRCMMIC} guarantees
the existence of a discrete symmetric monoidal category localizing at $X$. The question becomes whether such an equivalence can still be realized via the root functor, notably via its adjoint map $\cN(\mathsf{Env}(\omg/X))^\otimes\to X,$ a functor of symmetric monoidal $\infty$-categories from the envelope of $(\omg/X)^\otimes$ (still discrete), into $X$. 

\noindent In fact, it is immediate to see that the operad $(\Omega/X)^\otimes$
does not inherit a symmetric monoidal structure when $X$ is symmetric monoidal itself. However, it can be seen that it naturally carries a
\emph{pre-coCartesian} structure in the sense of \cite{KSW:ACFA}, and in ongoing work with M. Sporring we show that this structure can be exploited to prove that in this case $X$ is equivalent, as a symmetric monoidal $\infty$-category, to a localization of the envelope $\mathsf{Env}(\omg/X)^\otimes$ (at a class of morphisms strictly larger than that of root preserving ones.)
\end{rmk}

\section{Localization and un/straightening equivalences}\label{application}\label{sec:loccostalg} 

\noindent In this section, we describe the homotopy theory of algebras over a dendroidal $\infty$-operad in terms of locally constant algebras over its operad of elements. To this end, we study the compatibility of dendroidal localization with the covariant model structure for dendroidal left fibrations, appearing in the un/straightening equivalence.

	\subsection{Localization and the covariant model structure}\label{section4}
	
	\noindent For a dendroidal set $X$ and any subset of morphisms $S\subseteq X_{C_1}$, the localization map $\lambda \colon X \to X[S^{-1}]$ induces an adjunction of over-categories $$ \adjunction{\lambda_!}{\dsets/X}{\dsets/X[S^{-1}]}{\lambda^*},$$ which is a Quillen adjunction with respect to the covariant model structure for dendroidal left fibrations (\cite[Proposition 2.4]{He:AOIO}).
	\noindent It is natural to expect dendroidal left fibrations over the localization to be weakly equivalent to those left fibrations over $X$ for which all the maps of fibres $f_!\colon Y_a\to Y_b$ induced by the morphisms $f\colon a \to b$ in $S$ are weak homotopy equivalences of spaces. This is indeed the case, as we show in \Cref{localg}. Let us introduce some constructions first.

	\begin{construction}\label{construction}
		Let $X$ be a dendroidal set and $S$ a subset of $X_{C_1}$. For any $f\colon a \to b$ in $S$, we define the $r_f \colon (\eta,\{b\})\longrightarrow (C_1,f)$ in $\dsets/X$ as the following commutative triangle
	
	\begin{center}
		\begin{tikzcd}
			\eta \arrow[rd, "b"'] \arrow[rr, "r", hook] &   & C_1 \arrow[ld, "f"] \\
			& X &                    
		\end{tikzcd}
	\end{center}
	where the arrow $r\colon\eta\to C_1$ is the inclusion of the edge $\eta$ into the root of $C_1$, or more familiary the map $\{1\}\colon [0]\to [1]$. 
	\noindent We write $S_{/X}$ for the set of morphisms of the form $r_f$, for $f\in S$.
	
	\end{construction} 
	
	\noindent Given a model category $\cM$ and some set $\cS$ of morphisms in it, we can talk about \emph{$\cS$-local} objects in $\cM$: there are the fibrant objects $M$ for which, for any morphism $s\colon A\to B$ in $\cS$, the morphism of mapping spaces $$ s_*\colon \mathsf{Map}_\cM(B,M)\longrightarrow \mathsf{Map}_\cM(A,M)$$ is a weak homotopy equivalence of spaces. For $\cM$ given by the covariant model structure for dendroidal left fibrations over a dendroidal set $X$, we have an explicit way of computing mapping spaces between fibrant-cofibrant objects: 
	\begin{rmk}\label{covmappingspace}
		If $(A,u)$ is a normal dendroidal set over $X$ and $(E,p)$ a dendroidal left fibration over $X$, we have an equivalence of spaces $$\mathsf{Map}_{\dsets/X}((A,u),(E,p))\simeq \hom_{X}(A,E)= \hom(A,E)\times_{\hom(A,X)} \{u\}.$$ 
	\end{rmk}
	
\noindent Maps in $S_{/X}$ have both cofibrant source and taget; we can rephrase $S_{/X}$-locality as follows.
	
	\begin{deff}\label{locdendrleft}
		A $(E,p)$ dendroidal left fibration over $X$ is $S_{/X}$-local if, for any $f$ in $S$, the morphism $$(r_f)^*\colon  \hom(C_1,E)\times_{\hom(C_1,X)} \{f\}\longrightarrow \hom(\eta,E)\times_{\hom(\eta,X)}\{b\}\simeq E_b= p^{-1}(b),$$  induced by the root inclusion $\eta \hookrightarrow C_1$
		is a weak homotopy equivalence of Kan complexes.\end{deff}

	\noindent We write $\cL_{S}\left(\dsets/X\right)$ for the left Bousfield localization of the covariant model structure on $\dsets/X$ at the set of arrows $S_{/X}$, which, recall, is left proper and cofibrantly generated. In this model structure, the fibrant objects are the $S_{/X}$-local dendroidal left fibrations, while the cofibrations are the normal monomorphisms over $X$. In particular, a morphism between $S_{/X}$-local left fibrations $\varphi\colon (E,p)\to (E',p')$ is a weak equivalence in the localization if and only if it is a covariant weak equivalence, hence a fibrewise weak homotopy equivalence. We call the model structure $$ \cL_S\left(\dsets/X\right)$$ the \emph{$S_{/X}$-local covariant model structure}. It is precisely this localization that encodes the compatibility of localization with the covariant model structure:
	
	\begin{prop}\label{localg} Let $X$ be a normal dendroidal set and $S\subseteq X_{C_1}$ a subset of morphisms. The localization map $\lambda\colon X\to X[S^{-1}]$ induces a Quillen adjunction
		$$ \adjunction{\lambda_!}{\cL_S\left(\dsets/X\right)}{\dsets/X[S^{-1}]}{\lambda^*}$$ between the $S_{/X}$-local covariant model structure on $\dsets/X$ and the covariant model structure on $\dsets/X[S^{-1}]$. Moreover, the adjunction is a Quillen equivalence.
	\end{prop}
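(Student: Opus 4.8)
The statement asserts that $\lambda\colon X\to X[S^{-1}]$ induces a Quillen equivalence between the $S_{/X}$-local covariant model structure on $\dsets/X$ and the covariant model structure on $\dsets/X[S^{-1}]$. The Quillen adjunction part is already known (it is the restriction of the covariant Quillen adjunction $(\lambda_!,\lambda^*)$ to a Bousfield localization, and one only needs to check that $\lambda_!$ sends the new trivial cofibrations $S_{/X}$ to weak equivalences, which follows from the universal property of the localization: the image of $r_f\colon (\eta,\{b\})\to (C_1,f)$ under $\lambda_!$ is the map $(\eta,\{\lambda b\})\to (C_1,\lambda f)$ over $X[S^{-1}]$, and since $\lambda f$ is an equivalence in $X[S^{-1}]$, the root inclusion $\eta\hookrightarrow C_1$ over an equivalence is a covariant weak equivalence). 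So the real content is the Quillen \emph{equivalence}.

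\medskip

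\noindent\textbf{Strategy for the equivalence.} I would verify the two standard conditions: (i) $\lambda^*$ reflects weak equivalences between fibrant objects, and (ii) for every cofibrant $(A,u)$ in $\dsets^S/X$, the composite $(A,u)\to \lambda^*\lambda_!(A,u)\to \lambda^*(R\lambda_!(A,u))$ (derived unit) is an $S_{/X}$-local equivalence. For (i): a fibrant object of $\dsets/X[S^{-1}]$ is a dendroidal left fibration $(E,q)$ over $X[S^{-1}]$; its pullback $\lambda^*(E,q)$ is a dendroidal left fibration over $X$, and one checks it is automatically $S_{/X}$-local — this is the key point — because pulling back along $\lambda$ identifies the fibre of $\lambda^*E$ over $a\in X$ with the fibre of $E$ over $\lambda(a)\in X[S^{-1}]$, and for $f\colon a\to b$ in $S$ the map $\lambda(f)$ is an equivalence in $X[S^{-1}]$, so the fibre-transport along it is a homotopy equivalence of Kan complexes; reading this off via \Cref{locdendrleft} gives $S_{/X}$-locality. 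A fibrewise weak equivalence $\lambda^*(E,q)\to\lambda^*(E',q')$ then forces $E_{\lambda(a)}\to E'_{\lambda(a)}$ to be an equivalence for all $a\in X$, and since $\lambda$ is surjective on objects (the localization \Cref{modeloc} only adds $2$-cells and inverse $1$-cells via $J$, not new objects), this means $(E,q)\to(E',q')$ is a fibrewise equivalence, hence a covariant weak equivalence; so $\lambda^*$ reflects weak equivalences between fibrant objects.

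\medskip

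\noindent\textbf{The derived unit.} For (ii) the efficient route is to reduce to representables and dualize to mapping spaces. Using \Cref{covmappingspace} and the already-established universal property of the localization on hom-objects ($\hom(X[S^{-1}],Z)\simeq\hom_S(X,Z)$, and its relative/sliced analogue), one shows that for any $S_{/X}$-local left fibration $(E,p)$ and any normal $(A,u)$ over $X$, restriction along $\lambda$ induces an equivalence
\[
\mathsf{Map}_{\dsets/X[S^{-1}]}(\lambda_!(A,u), (E',q))\xrightarrow{\ \sim\ }\mathsf{Map}_{\dsets^S/X}((A,u),\lambda^*(E',q)),
\]
where $(E',q)$ is the fibrant replacement, i.e. $\lambda^*$ is homotopically fully faithful on fibrant objects. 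Combined with (i) (essential surjectivity onto fibrant objects up to equivalence, since every dendroidal left fibration over $X[S^{-1}]$ is of the form $R\lambda_!$ of its pullback, again by the locality argument), this gives the Quillen equivalence. Concretely I would prove fully-faithfulness by the skeletal/cocontinuity argument used in \Cref{main}: both $\mathsf{Map}$ functors send colimits in $A$ to limits, so it suffices to treat $A=\Omega[T]$ representable, where the relative mapping space $\hom_X(\Omega[T],E)$ is a fibre of $\hom(\Omega[T],E)$ and the statement becomes the unsliced universal property of $X[S^{-1}]$ applied with target the $\infty$-operad $E$ — here one uses that a dendroidal left fibration, being in particular an $\infty$-operad, detects exactly the $S$-inverting maps.

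\medskip

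\noindent\textbf{Main obstacle.} The crux is establishing that $\lambda^*$ lands in $S_{/X}$-local objects and is homotopically fully faithful there; equivalently, that pulling back along $\lambda$ and then applying $R\lambda_!$ recovers the identity on fibrant objects. The subtlety is that $\lambda$ is not itself a covariant/operadic weak equivalence (it genuinely changes the homotopy type by inverting $S$), so one cannot invoke the last clause of Heuts' theorem directly; one must instead argue at the level of fibres and mapping spaces, carefully using the explicit pushout model $\cL(X,S)$ of \Cref{modeloc} to control what $\lambda$ does — namely that it adjoins copies of $J$ along $C_1\xrightarrow{s}J$ and therefore, over each object, only localizes the relevant fibre-transport maps. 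Threading the sliced mapping-space computations through this pushout, while keeping track of normality (so that cofibrant replacements behave), is the delicate part; everything else is a formal consequence of the universal property of localization and the fibrewise characterization of covariant weak equivalences.
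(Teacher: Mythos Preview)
Your treatment of the Quillen adjunction and of condition (i) (that $\lambda^*$ reflects weak equivalences between fibrants, via bijectivity of $\lambda$ on objects together with the fibrewise criterion) is correct. The gap is in (ii). The displayed equivalence
\[
\mathsf{Map}_{\dsets/X[S^{-1}]}(\lambda_!(A,u), (E',q))\xrightarrow{\ \sim\ }\mathsf{Map}_{\dsets^S/X}((A,u),\lambda^*(E',q))
\]
is just the adjunction isomorphism and carries no content. To show the derived unit is an $S_{/X}$-local equivalence you must show, for every $S_{/X}$-local left fibration $(E,p)$ over $X$, that $\hom_X(\lambda^* R\lambda_! A, E)\to \hom_X(A,E)$ is an equivalence --- and you cannot extract this from the adjunction unless you already know that $(E,p)$ lies in the essential image of $\lambda^*$. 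Your proposed reduction to representables and invocation of the ``unsliced universal property'' does not supply this: that universal property governs maps \emph{out of} $X[S^{-1}]$ into a fixed $\infty$-operad, whereas what is needed here is to \emph{extend} a left fibration over $X$ to one over $X[S^{-1}]$. These are genuinely different problems, and the second does not reduce to the first by a mapping-space manoeuvre.

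The paper addresses exactly this missing step by an explicit construction. Given a left fibration $(E,p)$ over $X$, it pulls back to $\bigsqcup_S C_1$ to obtain a Kan fibration $E'$, uses minimal Kan fibration theory (Gabriel--Zisman) to factor $E'$ through a product $\bigsqcup_S C_1\times M$ with $M$ a minimal Kan complex, extends this product trivially over $\bigsqcup_S J$, and then invokes a lifting argument of Joyal (as in Kapulkin--Lumsdaine) to lift the trivial fibration, producing a Kan fibration $Z\to \bigsqcup_S J$ whose restriction recovers $E'$. The pushout $Y\coloneqq E\cup_{E'}Z$ is then checked to be a left fibration over $X[S^{-1}]$ with $(E,p)\xrightarrow{\sim}\lambda^*(Y,p')$ a fibrewise equivalence. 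This concrete extension is the substantive content of the proof; once it is in hand, fully-faithfulness of $\mathbb{R}\lambda^*$ (which both you and the paper treat as the easy half) finishes the argument. Your outline has the right architecture, and your ``main obstacle'' paragraph correctly locates the difficulty, but the mapping-space strategy you sketch does not provide a substitute for this construction.
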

	
	\begin{proof}
		We know that the adjunction $$  \adjunction{\lambda_!}{\dsets/X}{\dsets/X[S^{-1}]}{\lambda^*}$$ is a Quillen adjunction between covariant model structures (\cite[Proposition 9.62]{HeMo:SDHT}). To prove that it is still a Quillen adjunction after localizing on the left, it suffices to show that, given any dendroidal left fibration $(E,p)$ over $X$, the element $(E,p)$ is $S_{/X}$-local if and only if there exists a dendroidal left fibration $(Y,p')$ over $X[S^{-1}]$ and a covariant weak equivalence $(E,p)\xrightarrow{\sim}\lambda^*(Y,p)$ in $\dsets/X$. 
		
		\noindent Fix any such dendroidal left fibration $(E,p)$. Write $S\times C_1$, resp. $S\times J$, for the simplicial set given by the union $S\times C_1= \bigsqcup_{s\in S}C_1$, resp. $S\times J= \bigsqcup_{s\in S}J$, where $J$ is the nerve of the connected groupoid on two objects. 
		
		\noindent The pullback $$ E'\coloneqq (S\times {C_1})\times_X E \longrightarrow S \times C_1$$ is a left fibration of simplicial sets, and after \cite[Proposition 2.1.3.1]{Lu:HTT} it is a Kan fibration. As such, it can be factored as the composition $E'\to E''\to S\times C_1$, where $E'\to E''$ is a trivial Kan fibration and $E''\to S\times C_1$ is a minimal Kan fibration. As explained in \cite[\S 5.4]{GZ:CFHT}, there is an isomorphism $E''\simeq S\times C_1\times M$ for some minimal Kan complex $M$, and the map $E''\to S\times C_1$ is the projection. In particular, the map $ S\times J \times M\to S\times J$ is a minimal Kan fibration, fitting into the pullback diagram
		
		\[
		\begin{tikzcd}
			E'' \arrow[r, "\phi"] \arrow[d] & S\times J \times M \arrow[d]\\ S\times C_1\arrow[r] & S\times J
		\end{tikzcd}
		\]
		\noindent An argument due to Joyal (see \cite[Lemma 2.2.5]{KL:SMUF}) shows that we can find a trivial Kan fibration $Z\to S\times J \times M $ and an isomorphism $E'\simeq \phi^*(Z)$ over $E''$. Thus we obtain a commutative diagram 
		\begin{center}
			\begin{tikzcd}
				E \arrow[d] & {E'} \arrow[l] \arrow[d] \arrow[r] & Z \arrow[d]   \\
				X          & {S\times C_1} \arrow[l] \arrow[r]                       & S\times J
			\end{tikzcd}
		\end{center}
		where the right hand square is a pullback and the map $Z\to S\times  J$ is a Kan fibration. Define $$Y\coloneqq E\bigcup_{E'}Z \;$$ there is a canonical map $p'\colon Y \to X[S^{-1}]$, and its pullback along the surjection $(S\times J)\sqcup X \to X[S^{-1}]$ consists in the dendroidal left fibration $Z\sqcup E \to (S\times J)\sqcup X$. In particular, $p'$ is a dendroidal left fibration as well. Moreover, for any object $x$ of $X$, there is a weak homotopy equivalence of fibres $(E,p)_x\to \lambda^*(Y,p')_x$, hence a covariant weak equivalence $(E,p)\to \lambda^*(Y,p')$ over $X$, as wanted. This concludes the proof that there is an induced Quillen adjunction. It is straightforward to check that $\mathbb{R}\lambda^*$ is fully faithful, hence we conclude that the induced adjunction is a Quillen equivalence, as wanted.
	\end{proof}
	
	\begin{rmk} The above proof essentially extends to the dendroidal context Stevenson's proof of \cite[Proposition 5.11]{Ste:CMSSL} for simplicial sets.
	\end{rmk}
	
\subsection{Locally constant algebras}\label{sec:loccost}
\noindent We now perform an analogous localization of the projective model structure on the category $\alg_P(\ssets)$ of algebras over a (discrete) operad $P$. In this context, one must work in the framework of semi-model categories (see \cite{H:MMC}, \cite{Bar:LRMCLRBL}, \cite{WY:BLAOCO}; cf. \cite[\S 2]{C:WBLHIFMA} for a concise account).

\noindent Recall that a semi-model category consists of classes of weak equivalences, cofibrations, and fibrations satisfying the usual axioms, except that the lifting property for trivial fibrations and the factorization axioms are required only for maps with cofibrant source.

\noindent This weakening is necessary because left Bousfield localization of a model category at a set of maps yields a model structure under left properness, a condition that frequently fails for categories of operadic algebras. Nevertheless, such a localization still produces a semi-model structure, and it is in this sense that we proceed.

\noindent Let us give the following definition. This makes sense for any simplicial operad $P$.

\begin{deff}
	A simplicial $P$-algebra $A$ is \emph{$S$-locally constant} if $A$ sends $S$ to equivalences, that is for any $f\colon a \to b$ in $S$, the map $A(f)\colon A(a)\to A(b)$ is a weak homotopy equivalence of simplicial sets.
\end{deff}

\noindent Let us consider the operadic un/straightening equivalence of \cite[Theorem 5.9]{P:RDLF}, which consists, for any $\Sigma$-free operad $P$, in a Quillen equivalence \begin{equation}\label{qerect}
	\adjunction{\rho_!^P}{\dsets/\cN_d(P)}{\alg_P(\ssets)}{\rho^*_P}
\end{equation} between the covariant model structure on the left and the projective model structure on the right. The left adjoint $\rho_!^P$ is defined as follows.

\begin{deff}The functor $\rho_!^P\colon \dsets/\cN_d P \to \alg_P(\ssets)$ is the essentially unique cocontinuous functor characterized by the fact that, for any tree $T$ and morphism $\alpha\colon T \to \cN_d(P)$, the $P$-algebra $\rho_!^P(T,\alpha)$ is defined as $$ \mathsf{Ob}(P)\ni c \mapsto \rho_!^P(T,\alpha)(c)\simeq \mathsf{Env}(T)\underset{\mathsf{Env}(P)}{\times} \mathsf{Env}(P)_{/c},$$ where $\mathsf{Env}$ denotes the symmetric monoidal envelope \cite[\S2.2.4]{Lu:HA}.
\end{deff}

\noindent We can now prove the following

\begin{prop}\label{fibobjs}
		Let $P$ be a discrete $\Sigma$-free operad. The operadic un/straightening adjunction $(\rho_!^P,\rho^*_P)$ induces a Quillen equivalence
		$$ \adjunction{\rho_!^P}{\cL_S\left(\dsets/\cN_d(P)\right)}{\cL_S\left(\alg_P(\ssets)\right)}{\rho^*_P}$$ between the covariant model structure for $S$-local left fibrations and the left Bousfield localization of the projective model structure whose fibrant objects are $S$-locally constant $P$-algebras. 
\end{prop}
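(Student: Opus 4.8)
The strategy is to transfer the Quillen equivalence of \Cref{localg} across the operadic un/straightening equivalence \eqref{qerect}, which already holds before localizing. The key observation is that $\rho_!^P$ and $\rho^*_P$ carry the localizing set $S_{/P}$ (in the sense of \Cref{construction}, where $P$ stands for $\cN_d P$) to, respectively from, the maps witnessing local constancy, so that a left Bousfield localization on one side matches the left Bousfield localization on the other.

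First I would set up the two left Bousfield localizations carefully. On the dendroidal side we have $\dsets^S/\cN_d P$, the localization of the covariant model structure at the set $S_{/\cN_d P}=\{r_f : (\eta,\{b\})\to (C_1,f)\}_{f\in S}$. On the algebra side, since a model structure is determined by its cofibrations and fibrant objects (and the projective cofibrations are unchanged), $\alg_P^S(\ssets)$ is precisely the left Bousfield localization of the projective model structure at the set of maps obtained by applying the left adjoint $\rho_!^P$ to $S_{/\cN_d P}$ — provided I check that the fibrant objects of that localization are exactly the $S$-locally constant projectively fibrant algebras. So the plan is: (i) compute $\rho_!^P(C_1,f)$ and $\rho_!^P(\eta,\{b\})$ using the explicit envelope formula, identifying $\rho_!^P(r_f)$ with a concrete map of free-ish $P$-algebras; (ii) show that a projectively fibrant $P$-algebra $A$ is $S$-locally constant if and only if it is local with respect to $\{\rho_!^P(r_f)\}_{f\in S}$, i.e.\ $\mathsf{Map}(\rho_!^P(C_1,f),A)\to \mathsf{Map}(\rho_!^P(\eta,\{b\}),A)$ is an equivalence for all $f\in S$; (iii) conclude by the standard fact that a Quillen equivalence between two model structures descends to a Quillen equivalence between their left Bousfield localizations at corresponding sets of maps.

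For step (ii), the clean way is to use adjunction and \Cref{covmappingspace}: $\mathsf{Map}_{\alg_P(\ssets)}(\rho_!^P(A,u),A')\simeq \mathsf{Map}_{\dsets/\cN_d P}((A,u),\rho_P^*A')$, so that $A'$ is $\{\rho_!^P(r_f)\}$-local iff $\rho_P^* A'$ is $S_{/\cN_d P}$-local, iff $\rho_P^*A'$ is an $S_{/\cN_d P}$-local dendroidal left fibration over $\cN_d P$ in the sense of \Cref{locdendrleft} (since $\rho_P^*$ of a projectively fibrant algebra is a dendroidal left fibration by \eqref{qerect}). It then remains to unwind \Cref{locdendrleft} through the straightening dictionary: the map $(r_f)_*$ on mapping spaces, over the object $b$, computes the map of fibres $(\rho_P^*A')_a \to (\rho_P^*A')_b$ induced by $f$, which under un/straightening is exactly $A'(f)\colon A'(a)\to A'(b)$ up to weak equivalence. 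Hence $\{\rho_!^P(r_f)\}$-locality of $A'$ is equivalent to $A'(f)$ being a weak equivalence for all $f\in S$, i.e.\ $S$-local constancy. Once fibrant objects are matched, the cofibrations agree on both sides with the unlocalized ones, so the two localized model structures are genuine Bousfield localizations and $(\rho_!^P,\rho_P^*)$ remains a Quillen pair; that it is still a Quillen equivalence follows since $\mathbb{R}\rho_P^*$ was already fully faithful and essentially surjective onto the original homotopy category, and localizing both sides at corresponding sets preserves this.

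\textbf{Main obstacle.} The delicate point is step (ii), specifically matching \Cref{locdendrleft} — which is phrased in terms of the map of Kan complexes $(r_f)_*\colon \hom(C_1,E)\times_{\hom(C_1,X)}\{f\}\to E_b$ induced by the root inclusion $\eta\hookrightarrow C_1$ — with the ``induced map of fibres'' description. One has to verify that the domain of $(r_f)_*$ is a model for the fibre $E_a$ (equivalently, that restriction along the \emph{leaf} inclusion $\eta\hookrightarrow C_1$ gives an equivalence onto $E_a$, which uses the lifting property of the left fibration $E\to X$ against the leaf horn of $C_1$, i.e.\ against $\{1\}\hookrightarrow C_1$), and that under this identification $(r_f)_*$ becomes the covariant transport map $f_!\colon E_a\to E_b$. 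This is essentially the dendroidal analogue of the simplicial-set fact that, for a left fibration, the evaluation-at-source map from the mapping-path-space fibre over an edge is a trivial fibration onto the source fibre; it is implicit in the covariant model structure machinery but should be stated and checked explicitly. Everything else — the envelope computation, the adjunction juggling, the descent of Quillen equivalences to Bousfield localizations — is routine.
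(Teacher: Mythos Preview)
Your proposal is correct and follows essentially the same route as the paper: transfer the un/straightening Quillen equivalence \eqref{qerect} to the left Bousfield localizations by showing, via adjunction and the identification $\rho_P^*(A)_c\simeq A(c)$, that a projectively fibrant $P$-algebra $A$ is $\rho_!^P(S_{/\cN_dP})$-local if and only if it is $S$-locally constant. Two small remarks: step~(i) is unnecessary---the paper never computes $\rho_!^P(r_f)$ explicitly but works entirely on the right-adjoint side, exactly as you do in your treatment of step~(ii)---and in your obstacle paragraph the leaf inclusion into $C_1$ is $\{0\}\hookrightarrow C_1$ (not $\{1\}$, which is the root), so the relevant lifting property is against $\{0\}\hookrightarrow C_1$; also, the opening reference to \Cref{localg} is a red herring, as that proposition plays no role here.
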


\begin{proof}
	
	\noindent In general, given a Quillen adjunction of model categories $\adjunction{F}{\cM}{\cN}{G}$ and a choice of (cofibrant) arrows $\cS$ in $\cM$, one can consider the left Bousfield localizations $\cL_\cS \cM$ on $\cM$ and the transfered localization on $\cN$, resp. $\cL_{\mathbb{L}F(\cS)}\cN$, of $\cM$ at $\cS$, resp. of $\cN$ at $\mathbb{L}F(\cS)$. It is a standard result of model categories (\cite{Hir:MCTL}) that, if $(F,G)$ is a Quillen equivalence, then there is an induced Quillen equivalence $\adjunction{F}{\cL_\cS\cM}{\cL_{\mathbb{L}F(\cS)}\cN}{G},$ where $\cL_{\mathbb{L}F(\cS)}\cN $ is the left Bousfield localization of $\cN$ at $\mathbb{L}F(\cS)$. This implies that we only need to characterize the fibrant objects of $ \cL_{\rho_!^P(S_{/\cN_d(P)})}\alg_P(\ssets)$ as the locally constant fibrant $P$-algebras.
	
	\noindent An object $A$ in $\cL_{\rho_!^P(\cS)}(\alg_P(\ssets))$ is fibrant if and only if it is projectively fibrant and $\rho_!^P(\cS_{/\cN_d P})$-local. 
	%This means that, given a morphism $f\colon a \to b$ in $S$, the map of mapping spaces $$ \rho_!^P(r_f)_*\colon \mathsf{Map}_{\alg_P(\ssets)}(\rho_!^P(C_1,f),A)\longrightarrow \mathsf{Map}_{\alg_P(\ssets)} (\rho_!^P(\eta,\{b\}), A)$$ is a weak homotopy equivalence.
	As the pair $(\rho^P_!,\rho_P^*)$ is a Quillen adjunction, this is equivalent to asking $\rho^*_P(A)$ to be $S_{/\cN_d P}$-local, that is, that $$ \mathsf{Map}_{\dsets/\cN_d(P)}((C_1,f),\rho^*_P(A))\longrightarrow \mathsf{Map}_{\dsets/\cN_d(P)}( (\eta,\{b\}),\rho^*_P(A))$$ is a weak homotopy equivalence. 
	As $A$ is projectively fibrant, $\rho^*_P(A)$ is a dendroidal left fibration, so in particular local with respect to the map ${(\eta,\{a\})\to (C_1,f)}$ induced by the leaf inclusion $\ell \colon \eta\hookrightarrow C_1$, so the above is an equivalence if and only if the map of spaces $$\mathsf{Map}_{\dsets/\cN_d(P)}((\eta,\{a\}),\rho^*_P(A))\longrightarrow  \mathsf{Map}_{\dsets/\cN_d(P)} ((\eta, \{b\}),\rho^*_P(A))$$ is a weak homotopy equivalence. After \Cref{covmappingspace}, there is an equivalence of spaces $$  \mathsf{Map}_{\dsets/\cN_d(P)}((\eta,\{x\} )\simeq{ \rho^*_P(A)}_x.$$
	\noindent As the right adjoint $\rho^*_P$ enjoys the property that for any $P$-algebra $A$ and any object $x$ of $P$, there is an equivalence $$\rho_P^*(A)_c\simeq A(c) $$ between the fibre of $\rho_P^*(A)$ over $c$ and the value of $A$ on $c$ (\cite[Lemma 2.9]{P:RDLF}), we conclude that $\rho_P^*(A)$ is $S_{/\cN_dP}$-local if and only if the map $$A(f)\colon A(a)\longrightarrow A(b)$$ is a weak homotopy equivalence of spaces, which means that $A$ is $S$-locally constant. This concludes the proof.
\end{proof} 

\begin{rmk}
	The above result only depends on the fact that $\rho_!^P$ preserves and detects fibrant $S$-locally constant algebras, which follows from the equivalence $\rho_!^P(F)_a\simeq F(a)$ between the
	fiber over an object $c$ of $P$ of the unstraightening of an algebra $F$ and evaluation of $F$ at $c$.
	\end{rmk}
\noindent As a corollary, by instantiating Proposition \ref{fibobjs} with $P=(\omg/X)^\otimes$, which is  $\Sigma$-free (Remark \ref{rmk:sfree}), we obtain a new version of the un/straightening equivalence for a normal dendroidal $\infty$-operad $X$, that is an equivalence of semi-model categories
$$\adjunction{\rho_!^{(\omg/X)^\otimes}}{ \cL_{\cR_X}\left(\dsets/\cN_d(\omg/X)^\otimes\right)}{\cL_{\cR_X}\left(\alg_{(\omg/X)^\otimes}(\ssets)\right)}{\rho^*_{(\omg/X)^\otimes}}$$ between the covariant model structure for $\cR_X$-local dendroidal left fibrations over $\cN_d \omg/X$ and the projective semi-model structure for $\cR_X$-locally constant $(\omg/X)^\otimes$-algebras.

\noindent We may interpret this as a 'local' un/straightening equivalence. As a corollary, we deduce the following local description for algebras.
\begin{coro}\label{corofip}

	Let $X$ be a normal dendroidal $\infty$-operad. There is a zig-zag of Quillen equivalences $$ \alg_{W_!(X)}(\ssets)\xleftarrow{\sim}\bullet\xrightarrow{\sim}\cL_{\cR_X}\left(\alg_{(\omg/X)^\otimes}(\ssets)\right)$$ between the projective model structures for simplicial $W_!(X)$-algebras and for $\cR_X$-locally constant algebras on $(\omg/X)^\otimes$.
	\end{coro}
\begin{proof}
By \cite[Proposition 2.4]{He:AOIO}, if $f\colon X \to Y$ is an operadic weak equivalence between normal dendroidal sets, the induced adjunction $(f_!,f^*)$ on over categories is a Quillen equivalence between covariant model structures. As this is natural in the dendroidal sets, applying \Cref{fibobjs} we obtain that the root functor yields a Quillen equivalence $$\adjunction{(\scriptr_X)_!}{\cL_{\cR_X}\left(\dsets/\cN_d(\omg/X)^\otimes\right) }{\dsets/X}{\scriptr_X^*}. $$ Combining this with Proposition \ref{fibobjs} and the classical \cite[Theorem 2.7]{He:AOIO}, we get the wanted zig-zag of Quillen equivalences of semi-model categories.
\end{proof}

\addtocontents{toc}{\SkipTocEntry}

\providecommand{\bysame}{\leavevmode\hbox to3em{\hrulefill}\thinspace}
\providecommand{\MR}{\relax\ifhmode\unskip\space\fi M`R }
% \MRhref is called by the amsart/book/proc definition of \MR.
\providecommand{\MRhref}[2]{%
	\href{http://www.ams.org/mathscinet-getitem?mr=#1}{#2}}
\providecommand{\href}[2]{#2}

\bibliographystyle{alpha}
\bibliography{Root}

\end{document}